\tikzset{
    symbol/.style={%
        draw=none,
        every to/.append style={%
            edge node={node [sloped, allow upside down, auto=false]{$#1$}}}
    }
}
\newcommand{\colim}{\mathrm{colim}}
\newcommand{\ints}{\mathbb{Z}}
\newcommand{\sph}{\mathbb{S}}
\newcommand{\E}{\mathbb{E}}
\newcommand{\modd}{/\!\! /}
\newcommand{\A}{\mathbb{A}}
\newcommand{\C}{\mathcal{C}}
\theoremstyle{definition} 
\newtheorem{rem}{Remark}
\newtheorem{defi}{Definition}
\theoremstyle{plain}
\newtheorem{thm}{Theorem}
\newtheorem{lem}{Lemma}
\newtheorem*{lem*}{Lemma}
\newtheorem{prop}{Proposition}
\newtheorem*{prop*}{Proposition}
\newtheorem*{thm*}{Theorem}
\newtheorem*{cor*}{Corollary}
\newtheorem{cor}{Corollary}
\begin{document}

\title{A Theorem on Multiplicative Cell Attachments with an Application to Ravenel's $X(n)$ Spectra}
\author{Jonathan Beardsley}
\maketitle

\begin{abstract}
	We show that the homotopy groups of a connective $\E_k$-ring spectrum with an $\E_k$-cell attached along a class $\alpha$ in degree $n$ are isomorphic to the homotopy groups of the cofiber of the self-map associated to $\alpha$ through degree $2n$. Using this, we prove that the $2n-1^{st}$ homotopy groups of Ravenel's $X(n)$ spectra are cyclic for all $n$. This further implies that, after localizing at a prime, $X(n+1)$ is homotopically unique as the $\E_1$-$X(n)$-algebra with homotopy groups in degree $2n-1$ killed by an $\E_1$-cell. Lastly, we prove analogous theorems for a sequence of $\E_k$-ring Thom spectra, for each odd $k$, which are formally similar to Ravenel's $X(n)$ spectra and whose colimit is also $MU$.  
\end{abstract}

\section{Introduction}\label{intro}

This paper extends the work of understanding the spectra $X(n)$ begun in \cite{beardsrelative}. These spectra were introduced by Ravenel in \cite{rav} and were used in an essential way in Devinatz, Hopkins and Smith's proof of Ravenel's Nilpotence Conjecture \cite{devhopsmith}. These spectra are similar to the complex cobordism spectrum $MU$ in a number of ways. Perhaps most importantly, much of the theory of \textit{complex orientations} for ring spectra can be developed for $X(n)$ by replacing $\mathbb{C}P^\infty$ with $\mathbb{C}P^{n-1}$ and formal group laws with \textit{formal group law $n$-buds} (cf.~\cite[Ch. II]{laz}). The interested reader can find a thorough description of the theory of complex orientations, both for $MU$ and $X(n)$, in \cite[\S 4.1, \S 6.5]{rav}. The work contained in this paper, along with \cite{beardsrelative}, also represents a small step in understanding chromatic homotopy theory and its fundamental objects, e.g.~$\sph$, $MU$ and $X(n)$, via the derived algebraic geometry framework developed in \cite{ha}. 

The main technical development of this paper is Theorem \ref{thm:presurvive} in Section \ref{section:construction}:
\begin{thm*}
	Let $A$ be a connective $\E_{k+1}$-ring spectrum and choose an element $\alpha\in\pi_d(A)$ for $d\geq 0$ with associated self-map $\bar{\alpha}\colon \Sigma^dA\to A$.  Then there is a map of $A$-modules $cof(\bar{\alpha})\to A\modd_{\E_k}\alpha$ which induces an isomorphism on homotopy groups in degrees less than $2d$. 
\end{thm*}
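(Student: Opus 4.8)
The plan is to present $A\modd_{\E_k}\alpha$ as a pushout of $\E_k$-algebras, to read off the comparison map from a nullhomotopy supplied by that pushout, and then to control the error term using the relative cotangent complex together with the connectivity of the co-Hurewicz map. I may assume $k\geq 1$ --- for $k=0$ the object $A\modd_{\E_0}\alpha$ is simply the $A$-module cofiber of $\bar\alpha$ and the map is an equivalence --- and $d\geq 1$, since for $d=0$ the statement is vacuous (both sides are connective). Note that $A$ is then $\E_{k+1}$ with $k+1\geq 2$, so that $\pi_0 A$ is commutative and $\mathrm{Mod}_A$ is $\E_k$-monoidal.

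First I would recall that, by construction, $B:=A\modd_{\E_k}\alpha$ is the pushout, taken in $\E_k$-algebras in $\mathrm{Mod}_A$, of the cospan
\[
A\xleftarrow{\ \epsilon\ }\mathrm{Free}^A_{\E_k}(\Sigma^d A)\xrightarrow{\ \mathrm{ev}_\alpha\ }A ,
\]
where $\mathrm{Free}^A_{\E_k}(\Sigma^d A)$ is the free $\E_k$-$A$-algebra on a generator in degree $d$, the map $\mathrm{ev}_\alpha$ classifies $\alpha\in\pi_d(A)$, and $\epsilon$ is the augmentation (classifying $0$). Restricted to the generating summand $\mathrm{Sym}^1_{\E_k}(\Sigma^d A)=\Sigma^d A$, the map $\mathrm{ev}_\alpha$ is $\bar\alpha$ while $\epsilon$ is $0$; so the homotopy commutativity of the pushout square, together with the adjunction $\mathrm{Map}_{\mathrm{Mod}_A}(\Sigma^d A,B)\simeq\mathrm{Map}(\Sigma^d\sph,B)$, equips the composite $\Sigma^d A\xrightarrow{\bar\alpha}A\to B$ with a canonical nullhomotopy of $A$-modules. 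Feeding this into the universal property of the cofiber in $\mathrm{Mod}_A$ produces the asserted map $\varphi\colon\mathrm{cof}(\bar\alpha)\to B$, a map of $A$-modules under $A$.

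For the connectivity estimate I would use deformation theory. Since $B$ is obtained from $A$ by attaching a single $\E_k$-$A$-cell in degree $d+1$, the relative cotangent complex is $L_{B/A}\simeq\Sigma^{d+1}B$, and the map $A\to B$ is $(d+1)$-connective: the higher weight summands $\mathrm{Sym}^j_{\E_k}(\Sigma^d A)$ of the free algebra, $j\geq 2$, are $2d$-connective, so nothing new appears below degree $d+1$. By the Andr\'e--Quillen--Hurewicz theorem for maps of connective $\E_k$-rings, the co-Hurewicz map $\mathrm{cofib}(A\to B)\to L_{B/A}$ is then $2(d+1)$-connective; combined with the fact that $B\simeq A$ below degree $d$, this identifies $\mathrm{fib}(A\to B)$ with $\Sigma^d A$ in degrees below $2d$. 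On the other hand $\mathrm{fib}(A\to\mathrm{cof}(\bar\alpha))=\Sigma^d A$ on the nose, and $\varphi$ induces a map $\Sigma^d A\to\mathrm{fib}(A\to B)$ which, in degrees below $2d$, becomes a self-map of $\Sigma^d A$ in $\mathrm{Mod}_A$; since it is an isomorphism on $\pi_d$ --- it carries the module cell of $\mathrm{cof}(\bar\alpha)$ to the generating $\E_k$-cell of $B$ --- it is multiplication by a unit of $\pi_0 A$, hence an equivalence in that range. Chasing the two fiber sequences $\mathrm{fib}(A\to X)\to A\to X$ for $X=\mathrm{cof}(\bar\alpha)$ and $X=B$ --- under which $\mathrm{fib}(\varphi)$ is the cofiber of the induced map of fibers --- then shows that $\varphi$ is an isomorphism on $\pi_i$ for $i<2d$.

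The step I expect to be the main obstacle is this last one: it rests on the precise form of the Andr\'e--Quillen--Hurewicz connectivity estimate (the roughly quadratic gain over the connectivity of $A\to B$), and on verifying that $\varphi$ is genuinely compatible with the co-Hurewicz comparison --- that its effect on the respective distinguished cells matches up --- which is what legitimizes the ``self-map of $\Sigma^d A$'' identification. An alternative, more computational route avoids deformation theory: using the weight decomposition $\mathrm{Free}^A_{\E_k}(\Sigma^d A)\simeq\bigoplus_{j\geq 0}\mathrm{Sym}^j_{\E_k}(\Sigma^d A)$, the pushout of the cospan above formed in $\mathrm{Mod}_A$ (rather than in $\E_k$-algebras) is readily identified with $\mathrm{cof}(\bar\alpha)$ through degree $2d$ --- the contribution of $\mathrm{Sym}^{\geq 2}$ being $2d$-connective --- and one must then bound the range in which the $\E_k$-algebra pushout differs from this module-level pushout. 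That last comparison is where the non-formality of colimits of $\E_k$-algebras enters (for $k<\infty$ a pushout of $\E_k$-algebras is not a relative tensor product), and tracking the connectivity through the resulting little-cubes combinatorics is the genuinely delicate point either way.
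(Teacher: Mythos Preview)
Your approach is genuinely different from the paper's, and the gap you flag at the end is real and not easily closed without essentially redoing the paper's argument.

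The paper does not use the cotangent complex at all. Its key move is the identification (from \cite{acb}) of $A\modd_{\E_k}\alpha$ with $F_{\E_0}^{\E_k}(\mathrm{cof}(\bar\alpha))$, the free $\E_k$-algebra on the $\E_0$-algebra $\mathrm{cof}(\bar\alpha)$; the comparison map is then simply the unit $\eta\colon M\to F_{\E_0}^{\E_k}(M)$ of that adjunction, with $M=\mathrm{cof}(\bar\alpha)$. The connectivity estimate comes from an explicit filtration: write $M$ as the realization of the bar-type simplicial $\E_0$-algebra $[n]\mapsto A\oplus A^{\oplus n}\oplus M$, apply $F_{\E_0}^{\E_k}$ levelwise using $F_{\E_0}^{\E_k}(A\oplus X)\simeq F^{\E_k}(X)$, and pass the weight grading of the free $\E_k$-algebra through the realization (it is preserved because $Sym^n_{\E_k}$ commutes with sifted colimits). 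This produces a filtration of $F_{\E_0}^{\E_k}(M)$ with associated graded $\bigoplus_{n\geq 0}Sym^n_{\E_k}(\mathrm{cof}(u))$, where $u\colon A\to M$ is the unit and $\mathrm{cof}(u)\simeq\Sigma^{d+1}A$; the bottom two stages are literally $A\xrightarrow{u}M$, so $\mathrm{cof}(\eta)$ is filtered by the $n\geq 2$ pieces, each $(2d+2)$-connective.

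Your deformation-theoretic route could in principle be made to work, but the step you single out is not a technicality. The sentence ``in degrees below $2d$ the map becomes a self-map of $\Sigma^dA$, and it is an isomorphism on $\pi_d$, hence multiplication by a unit'' does not parse: you only know $\mathrm{fib}(A\to B)$ agrees with $\Sigma^dA$ \emph{degreewise} in a range, not as an $A$-module, so there is no self-map of a rank-one free module to which that reasoning applies. What you actually need is that the composite
\[
\Sigma^{d+1}A\xrightarrow{\ \Sigma\psi\ }\mathrm{cof}(A\to B)\xrightarrow{\ \mathrm{Hur}\ }L_{B/A}\simeq\Sigma^{d+1}B
\]
is $\Sigma^{d+1}$ of the unit $A\to B$; then a two-out-of-three argument on cofibers finishes. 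Proving that compatibility means unwinding the co-Hurewicz map for an $\E_k$-cell attachment and matching it against the nullhomotopy that defined $\varphi$, which is nontrivial and, once carried out, recovers much of the filtration picture above. One further caution: the claim that $A\to B$ is ``$(d+1)$-connective'' is false if read as $\pi_i$-iso for $i\leq d$, since $\alpha$ maps to $0$ in $\pi_d(B)$; what is true is that $\mathrm{cof}(A\to B)$ is $(d+1)$-connective, which is what the Hurewicz input actually requires.
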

\noindent Here, the spectrum $A\modd_{\E_k}\alpha$ is the \textit{versal $\E_k$-algebra on $A$ of characteristic $\alpha$}, described for $k=\infty$ in \cite{szymikprimechar} and for all $k$ in \cite{acb}. It is constructed as by ``attaching a $d$-cell'' to $A$ along $\alpha$ in the category of $\E_k$-$A$-algebras. Often, versal algebras of this form are conceptually interesting but have difficult to determine homotopy groups, so the above theorem may be of interest beyond its uses in this paper.

In Section \ref{xnspecif} we apply the above theorem to prove that $\pi_{2n-1}(X(n))$ is cyclic, and that, moreover, it is generated by a certain element $\chi_n$ which was identified in \cite{beardsrelative}. This is Corollary \ref{generators}:

\begin{cor*}
	The element $\chi_n\in\pi_{2n-1}(X(n))$ generates $\pi_{2n-1}(X(n))$ for all $n\geq 1$. 
\end{cor*}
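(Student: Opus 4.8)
The strategy is to combine the main theorem of Section~\ref{section:construction} with the structural results about $X(n)$ from \cite{beardsrelative}.  Recall from that paper that $X(n+1)$ is built from $X(n)$ by attaching an $\E_1$-cell along the class $\chi_n \in \pi_{2n-1}(X(n))$; in the notation of the excerpt this says $X(n+1) \simeq X(n)\modd_{\E_1}\chi_n$.  Since $X(n)$ is a connective $\E_2$-ring spectrum (being an $\E_2$-Thom spectrum over $\Omega SU(n)$, which receives an $\E_2$-map from $\Omega SU(n+1)$ giving the requisite $\E_1$-$X(n)$-algebra structure on $X(n+1)$), Theorem~\ref{thm:presurvive} applies with $A = X(n)$, $k=1$, and $\alpha = \chi_n$ in degree $d = 2n-1 \geq 1$.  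It provides a map of $X(n)$-modules
\[
    cof(\bar\chi_n) \longrightarrow X(n)\modd_{\E_1}\chi_n \simeq X(n+1)
\]
inducing an isomorphism on $\pi_i$ for $i < 2(2n-1) = 4n-2$.  In particular, since $2n-1 < 4n-2$ for all $n \geq 1$, we get $\pi_{2n-1}(cof(\bar\chi_n)) \cong \pi_{2n-1}(X(n+1))$.

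**Reducing to a computation with the cofiber sequence.**  Next I would examine the long exact sequence in homotopy associated to the cofiber sequence $\Sigma^{2n-1}X(n) \xrightarrow{\bar\chi_n} X(n) \to cof(\bar\chi_n)$.  In degree $2n-1$ this reads
\[
    \pi_{2n-1}\big(\Sigma^{2n-1}X(n)\big) \xrightarrow{(\bar\chi_n)_*} \pi_{2n-1}(X(n)) \longrightarrow \pi_{2n-1}(cof(\bar\chi_n)) \longrightarrow \pi_{2n-2}\big(\Sigma^{2n-1}X(n)\big).
\]
The last group is $\pi_{-1}(X(n)) = 0$ by connectivity, and the first group is $\pi_0(X(n)) = \ints$.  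The map $(\bar\chi_n)_*$ is multiplication by (the image of) $\chi_n$, i.e.\ it sends the generator $1 \in \pi_0(X(n))$ to $\chi_n \in \pi_{2n-1}(X(n))$.  Hence $\pi_{2n-1}(cof(\bar\chi_n)) \cong \pi_{2n-1}(X(n))/(\chi_n)$, the quotient of $\pi_{2n-1}(X(n))$ by the subgroup generated by $\chi_n$.  Combining with the isomorphism from Theorem~\ref{thm:presurvive}, I obtain
\[
    \pi_{2n-1}(X(n+1)) \cong \pi_{2n-1}(X(n)) / (\chi_n).
\]

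**Closing the induction.**  Now I would run an induction on $n$.  The point is that $\pi_{2n-1}(X(n))$ is \emph{a priori} known (e.g.\ via the Atiyah–Hirzebruch or James spectral sequence computation recalled in \cite{beardsrelative}, or from Ravenel's description of $H_*X(n)$) to have a certain rank, and the formula above, applied with $n$ replaced by $n-1$ so as to compute $\pi_{2n-3}(X(n))$ rather than $\pi_{2n-1}(X(n))$, does not directly see the group we want.  The cleaner route is: $X(n)$ agrees with $X(n+1)$ — indeed with $MU$ — in a range of degrees, specifically the map $X(n) \to X(n+1)$ is an isomorphism on $\pi_i$ for $i$ up to roughly $2n$ (again from the cell structure, since the first cell attached beyond $X(n)$ sits in degree $2n-1$ but the cofiber only differs from $X(n)$ starting there, and the \emph{next} cell is in even higher degree).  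More precisely, from $X(n+1) \simeq cof(\bar\chi_n)$ in the relevant range we read that $\pi_i(X(n+1)) \cong \pi_i(X(n))$ for $i < 2n-1$.  Therefore the value of $\pi_{2n-1}(MU) \cong \pi_{2n-1}(X(m))$ for all $m \geq$ some bound — but $MU_*$ is concentrated in even degrees, so $\pi_{2n-1}(MU) = 0$.  Feeding this back, for $m$ large we have $\pi_{2n-1}(X(m)) = 0$, and then descending the identity $\pi_{2n-1}(X(m+1)) \cong \pi_{2n-1}(X(m))/(\chi_m)$ is the wrong direction; instead the correct inductive statement is obtained by noting $\pi_{2n-1}(X(n+1)) = 0$ (as it already agrees with $MU$ through degree $2n-1$ once the $(2n-1)$-cell has been attached — this is the content of \cite{beardsrelative}), so the exact sequence forces the surjection $\ints = \pi_0(X(n)) \twoheadrightarrow \pi_{2n-1}(X(n))$ given by $1 \mapsto \chi_n$ to be surjective, which is exactly the claim that $\chi_n$ generates $\pi_{2n-1}(X(n))$.

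**The main obstacle.**  The genuine input I need is the fact that $\pi_{2n-1}(X(n+1)) = 0$, equivalently that attaching the single $\E_1$-cell along $\chi_n$ already kills everything in degree $2n-1$; this is where \cite{beardsrelative} and the comparison with $MU$ (whose odd homotopy vanishes) do the real work, and I would need to cite it precisely and check that the degree ranges line up — in particular that no \emph{lower} cell of $X(n+1)$ relative to $X(n)$ interferes, which is why the hypothesis $2n-1 < 4n-2$ in Theorem~\ref{thm:presurvive} is exactly what makes the argument go through without worrying about higher cells. The one subtlety to verify carefully is that the self-map $\bar\chi_n$ used in Theorem~\ref{thm:presurvive} is the same (up to homotopy, hence inducing the same map on $\pi_*$) as multiplication by $\chi_n$ under the unit $\pi_0(X(n)) = \ints \to \pi_{2n-1}(X(n))$; this is immediate from the definition of the associated self-map $\bar\alpha\colon \Sigma^d A \to A$ as $\alpha$ smashed with $A$ followed by multiplication.
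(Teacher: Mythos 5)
Your proposal is correct and follows essentially the same route as the paper: identify $X(n+1)\simeq X(n)\modd_{\E_1}\chi_n$ via \cite[Corollary 13]{beardsrelative}, apply Theorem \ref{thm:presurvive} (your long-exact-sequence step is exactly the paper's Corollary \ref{survive}), and feed in the vanishing $\pi_{2n-1}(X(n+1))\cong\pi_{2n-1}(MU)=0$, which the paper gets from Ravenel's comparison $\Omega SU(n+1)\to BU$ being an equivalence below degree $2n+1$ (rather than from \cite{beardsrelative}, as you suggest). The inductive detour in your final section is unnecessary --- once $\pi_{2n-1}(X(n+1))=0$ is in hand, the surjection $\pi_0(X(n))\to\pi_{2n-1}(X(n))$, $1\mapsto\chi_n$, finishes the argument exactly as you conclude.
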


\noindent These elements $\chi_n$ arise from the facts that $X(n)$ is the Thom spectrum of a map $\Omega SU(n)\to BGL_1(\sph)$ and that $\Omega SU(n)$ sits in a fiber sequence $\Omega SU(n)\to \Omega SU(n+1)\to \Omega S^{2n-1}$. In \cite{beardsrelative} it was shown that $X(n)$ is a quotient of $\sph$ by an action of $\Omega^2 SU(n)$. Because the action of $\Omega^2 SU(n)$ on $\sph$ factors through $\Omega^2 SU(n+1)$, there is a residual action of $\Omega^2 SU(n+1)/\Omega^2 SU(n)\simeq\Omega^2 S^{2n-1}$ on $\sph/\Omega^2 SU(n)\simeq X(n)$. This yields an $\E_1$-map $\Omega S^{2n-1}\to BGL_1(\sph/\Omega^2 SU(n-1))\simeq BGL_1(X(n))$. This is equivalent data to a map of pointed spaces $S^{2n-2}\to BGL_1(X(n))$, and we take $\chi_n$ to be the element canonically associated to it by \cite[Definition 4.8]{acb}. 

It should be remarked that very little is known about the homotopy groups of $X(n)$ in general. A complete knowledge of their homotopy groups is at least as difficult to obtain as complete knowledge of the stable homotopy groups of spheres. However, some computational progress has been made by Ravenel and others, and the state of the art can be found in \cite{nakairavenel}.

While the above allows us to identify $\chi_n$ as a generator of $\pi_{2n-1}(X(n))$, and $X(n+1)$ as the versal $\E_1$-$X(n)$-algebra of characteristic $\chi_n$, this still relies on the somewhat geometric construction of $\chi_n$ given in \cite{beardsrelative}. However, after localizing at a prime $p$, we see in Theorem \ref{thm:canonical} that simply attaching an $\E_1$-cell to $X(n)$ along \textit{any} generator of $\pi_{2n-1}$ will still produce (the $p$-localization of) $X(n+1)$:

\begin{thm*}
	The spectrum $X(n+1)_{(p)}$ can be obtained from $X(n)_{(p)}$ by attaching an $\E_1$-$X(n)_{(p)}$-cell along any generator of the cyclic group $\pi_{2n-1}(X(n)_{(p)})$.
\end{thm*}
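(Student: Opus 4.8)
The plan is to combine the three earlier statements in the excerpt. We already know from Corollary \ref{generators} that $\pi_{2n-1}(X(n))$ is cyclic, generated by the geometrically-defined class $\chi_n$, and that $X(n+1)$ is the versal $\E_1$-$X(n)$-algebra of characteristic $\chi_n$, i.e.\ $X(n+1)\simeq X(n)\modd_{\E_1}\chi_n$. Localizing the arithmetic fracture/base-change diagram at $p$, all of this remains true after $p$-localization: $X(n+1)_{(p)}\simeq X(n)_{(p)}\modd_{\E_1}\chi_n$, where now $\chi_n$ denotes the image of the generator in the cyclic group $\pi_{2n-1}(X(n)_{(p)})$. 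So the only thing to check is that replacing $\chi_n$ by an \emph{arbitrary} generator $\alpha$ of $\pi_{2n-1}(X(n)_{(p)})$ does not change the homotopy type of the resulting $\E_1$-algebra obtained by attaching a $(2n-1)$-cell.

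First I would recall that $\pi_{2n-1}(X(n)_{(p)})$ is a cyclic $\ints_{(p)}$-module, hence of the form $\ints/p^j$ or $\ints_{(p)}$, and that any two generators differ by a unit $u\in\ints_{(p)}^{\times}$ (resp.\ $(\ints/p^j)^{\times}$). So it suffices to show that for any unit $u$ and any generator $\alpha$, attaching an $\E_1$-cell along $\alpha$ yields the same $\E_1$-$X(n)_{(p)}$-algebra as attaching along $u\alpha$. The cleanest way to see this: multiplication by $u$ is realized by an $\E_1$-$X(n)_{(p)}$-algebra automorphism of the free $\E_1$-$X(n)_{(p)}$-algebra on a $(2n-1)$-cell, $X(n)_{(p)}\{x\}$ with $x$ in degree $2n-1$ — namely the automorphism sending $x\mapsto ux$ (this makes sense because $u$ lives in $\pi_0$). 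Under this automorphism the map $X(n)_{(p)}\{x\}\to X(n)_{(p)}$ classifying $\alpha$ (sending $x\mapsto\alpha$) is carried to the map classifying $u\alpha$, and since the versal algebra $X(n)_{(p)}\modd_{\E_1}\alpha$ is defined as the pushout of $X(n)_{(p)}\leftarrow X(n)_{(p)}\{x\}\to X(n)_{(p)}$ (the right leg being the augmentation killing $x$), the two pushouts are identified by the induced equivalence. Hence $X(n)_{(p)}\modd_{\E_1}\alpha\simeq X(n)_{(p)}\modd_{\E_1}(u\alpha)$ for every unit $u$, and in particular, taking $u$ so that $u\chi_n=\alpha$, we get $X(n)_{(p)}\modd_{\E_1}\alpha\simeq X(n)_{(p)}\modd_{\E_1}\chi_n\simeq X(n+1)_{(p)}$.

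The step I expect to require the most care is the bookkeeping around ``attaching an $\E_1$-cell along a homotopy class'': one must be precise that the versal construction $A\modd_{\E_1}\alpha$ of \cite{acb} depends only on the class $\alpha\in\pi_d(A)$ and functorially on the pair $(A,\alpha)$ through $\E_1$-ring maps and scaling, so that the automorphism argument above is legitimate. I would phrase this via the universal property of the versal algebra (it is initial among $\E_1$-$A$-algebras $B$ for which $\alpha$ maps to zero in $\pi_d(B)$), from which the equivalence $A\modd_{\E_1}\alpha\simeq A\modd_{\E_1}(u\alpha)$ for a unit $u$ is immediate: a class $\beta$ vanishes in $\pi_d(B)$ if and only if $u\beta$ does, so the two versal algebras corepresent the same functor on $\E_1$-$A$-algebras and are canonically equivalent. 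This universal-property formulation sidesteps any delicate point-set manipulation of cell attachments and makes the whole argument a short formal consequence of Corollary \ref{generators} together with the compatibility of $p$-localization with the versal construction.
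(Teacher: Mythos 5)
Your main line of argument is correct and runs essentially parallel to the paper's proof: both reduce the statement to (i) the fact that $p$-localization, being a symmetric monoidal left adjoint, commutes with the pushout defining the versal algebra (the paper justifies this via \cite[Proposition 2.2.1.9]{ha} and \cite[Remark 7.3.2.13]{ha}; in your write-up it is asserted rather than argued, so supply the citation), and (ii) the observation that any two generators of the cyclic module $\pi_{2n-1}(X(n)_{(p)})$ differ by a unit $u$, which must be taken in $\pi_0(X(n)_{(p)})\cong\ints_{(p)}$ -- if the group is $\ints/p^e$ one lifts the unit of $\ints/p^e$ to a unit of $\ints_{(p)}$, which is why the paper phrases this in terms of elements of $\ints_{(p)}$ not divisible by $p$. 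The only real difference is where the unit acts: you let $u$ act on the free $\E_1$-algebra on the $(2n-1)$-cell, identifying the two defining pushout diagrams by an automorphism of the corner $F^{\E_1}(\Sigma^{2n-1}X(n)_{(p)})$ (legitimate, since $adj(\bar{\alpha})\circ F^{\E_1}(u)\simeq adj(\overline{u\alpha})$ and $adj(0)\circ F^{\E_1}(u)\simeq adj(0)$), whereas the paper lets $u$ act on the target of the cofiber sequence, obtains an equivalence of $\E_0$-$X(n)_{(p)}$-algebras $cof(\bar{\chi}_{n,p})\simeq cof(\bar{\alpha})$ by the 5-lemma, and then applies $F_{\E_0}^{\E_1}$ using \cite[Lemma 4.4]{acb}. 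Both implementations are sound; yours stays with the pushout definition, the paper's with the $\E_0$-cell description it needs elsewhere.

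One caution about your closing reformulation: $A\modd_{\E_1}\alpha$ is \emph{versal}, not universal -- it admits a map to every $\E_1$-$A$-algebra $B$ in which $\alpha$ becomes null, but that map is not unique, so it is \emph{not} initial among such algebras and does not corepresent the property ``$\alpha$ vanishes in $\pi_d(B)$.'' What it corepresents is the \emph{space of nullhomotopies} of $\bar{\alpha}$ in $B$; since multiplication by the unit $u$ gives a natural equivalence between the nullhomotopy spaces for $\bar{\alpha}$ and $\overline{u\alpha}$, the Yoneda-style argument can be repaired, but as literally stated that paragraph is incorrect. Your first, automorphism-of-the-free-algebra argument does not depend on it and stands on its own.
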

\noindent We make special note of this only because of its similarity to the work of Priddy on constructing the Brown-Peterson spectrum $BP$, which is a summand of $MU_{(p)}$, by coning off all elements in odd degrees \cite{priddybp}. The above theorem suggests that $X(n+1)_{(p)}$ is the spectrum obtained when one cones off the homotopy of $X(n)_{(p)}$ in degree $2n-1$ using an $\E_1$-cell (and that, in some poorly defined sense, $MU_{(p)}$ is obtained by coning off all homotopy in odd degrees, but with $\E_1$-cells). For the time being, this similarity appears to be entirely superficial. We still include the theorem however as it, at the very least, hints at a purely algebraic construction of $MU$, as requested by Ravenel in \cite[\S 1.3]{rav}.

Finally, in Section \ref{section:xnk} we show that for any odd integer $k$ there is a sequence of $\E_k$-ring spectra $\{X(n,k)\}_{n\geq 0}$ with $X(0,k)\simeq \sph$ whose colimit over $n$ is equivalent to $MU$. The spectrum $X(n,k)$ is the Thom spectrum of the base point component of the map $\Omega^k SU(n)\hookrightarrow \Omega^k SU\simeq \ints\times BU$. We prove that these spectra satisfy similar properties to the $X(n)$ spectra. In particular we have Lemma \ref{lem:equivbelow} which completely describes their homotopy in low degrees
\begin{lem*}
	For odd $k$, the map $X(n,k)\to MU$ induced by the inclusion $\Omega^k SU(n)\hookrightarrow BU$ induces an isomorphism on homotopy in degrees less than or equal to $2n-k-1$. 
\end{lem*}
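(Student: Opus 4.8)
The plan is to exhibit both $X(n,k)$ and $MU$ as Thom spectra in a compatible way and then to reduce the claim to a connectivity estimate for $SU(n)\hookrightarrow SU$, looped $k$ times.

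Recall from Section~\ref{section:xnk} that $X(n,k)$ is the Thom spectrum of a map out of the base-point component $(\Omega^k SU(n))_0$ which factors as $(\Omega^k SU(n))_0\xrightarrow{\iota_n}(\Omega^k SU)_0\simeq BU\to BGL_1(\sph)$, where the composite $BU\to BGL_1(\sph)$ is the map whose Thom spectrum is $MU$; under this identification the map $X(n,k)\to MU$ of the statement is precisely the map of Thom spectra induced by $\iota_n$. The stable spherical fibrations in play are pulled back from $BU$ and are therefore canonically $H\ints$-orientable, so the Thom isomorphism identifies $H_*(X(n,k);\ints)\to H_*(MU;\ints)$ with $(\iota_n)_*\colon H_*((\Omega^k SU(n))_0;\ints)\to H_*(BU;\ints)$. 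Since $X(n,k)$ and $MU$ are connective, the Hurewicz theorem for connective spectra shows that it is enough to prove $(\iota_n)_*$ is an isomorphism in degrees $<2n-k$ and an epimorphism in degree $2n-k$: then the cofibre of $X(n,k)\to MU$ is $(2n-k)$-connected, so $X(n,k)\to MU$ is an isomorphism on $\pi_i$ for $i\le 2n-k-1$. (Alternatively one can avoid homology: the Thom spectrum functor preserves colimits and sends a relative $j$-cell to $\Sigma^j\sph$, hence carries a $c$-connected map of spaces to a $c$-connected map of spectra.)

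It thus suffices to show that $\iota_n$ is $(2n-k)$-connected. For every $m\ge n$ the inclusion $SU(m)\hookrightarrow SU(m+1)$ sits in a fibre sequence $SU(m)\to SU(m+1)\to SU(m+1)/SU(m)\cong S^{2m+1}$, so its long exact sequence shows that it is $2m$-connected; composing these maps and passing to the colimit, $SU(n)\hookrightarrow SU$ is $2n$-connected. Looping a $c$-connected map $k$ times yields a $(c-k)$-connected map, since its homotopy fibre is $(c-1)$-connected and the $k$-fold loop space of that fibre, which is the homotopy fibre of the looped map, is $(c-1-k)$-connected. Hence $\Omega^k SU(n)\to\Omega^k SU$ is $(2n-k)$-connected, and passing to base-point components preserves this whenever $2n-k\ge 1$; when $2n-k\le 0$ the asserted range lies in degrees $\le 0$, where $X(n,k)\to MU$ is an isomorphism simply because it is a map of connective ring spectra with $\pi_0\cong\ints$.

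The real content, and the only place requiring care, is the bookkeeping in the second paragraph: one must verify that the Thom-spectrum presentation of $X(n,k)$ supplied by Section~\ref{section:xnk} is compatible with the usual one for $MU$ so that the comparison map is literally induced by $\iota_n$ (including the passage to base-point components), and that the fibrations involved are $H\ints$-orientable (which is immediate, as they come from complex vector bundles). After that, the connectivity count above is routine, so I expect the identification/orientability step to be the main obstacle, albeit a mild one.
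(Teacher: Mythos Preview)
Your proof is correct and follows essentially the same route as the paper: reduce to the $(2n-k)$-connectedness of $\Omega^k SU(n)\to BU$ via the fibre sequences $SU(m)\to SU(m+1)\to S^{2m+1}$, then transport this to the Thom spectra using an $H\ints$-Thom isomorphism together with Hurewicz for connective spectra. The only organizational difference is that the paper takes the cofibre of $\iota_n$ in spaces over $BU$ and Thomifies \emph{that} (using that the Thom functor preserves colimits), whereas you apply the Thom isomorphism naturally to $X(n,k)$ and $MU$ separately and then pass to the cofibre of spectra; your parenthetical alternative is exactly the paper's version. The paper justifies $H\ints$-orientability by observing that $BU\to BGL_1(H\ints)\simeq\mathbb{R}P^\infty$ is null since $H^1(BU;\ints/2)=0$, which amounts to your remark that the bundles are complex.
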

\noindent which generalizes the analogous statement for $X(n,1)=X(n)$ in \cite[\S 6.5]{rav}. We also formulate suitably adjusted versions of Corollary \ref{generators} and Theorem \ref{thm:canonical} for the spectra $X(n,k)$. 

These spectra may prove useful in understanding chromatic homotopy theory and the nilpotence detecting properties of $MU$, but because of the relative complexity of even their homology groups (as they are isomorphic to those of $\Omega^k SU(n))$, we postpone a thorough investigation to future work. 

There is also an appendix, Section \ref{appendix}, in which we prove several statements of a more category theoretic nature. These statements will be well known to experts. However, rigorous proofs of them in the language of quasicategories do not seem to be available in the literature, so we include them.

\subsection{Notation and Terminology}


All of what follows uses the quasicategorical and derived algebra machinery of Lurie, as described in \cite{htt} and \cite{ha}. Thus all of our categories will be quasicategories and any time we refer to a limit or colimit we will mean the quasicategorical limit or colimit as defined in \cite{htt}. However, understanding the main statements in this paper should not require much specialized knowledge of Lurie's work. We work specifically in the quasicategories of pointed Kan complexes (which we will refer to as spaces) and spectra. For both we will denote the tensor product by $\wedge$. We also use the shorthands $[n]=\{0,1,\ldots,n\}$ and $\langle n\rangle=\{\ast,1,\ldots,n\}$, where the latter is thought of as a \emph{pointed} set. It seems unlikely that the theory of quasicategories is necessary to proving the results of this note. However, we make heavy use of results from \cite{acb} and \cite{beardsrelative}, both of which use that language.

Most notation is described as it used in the text, but we will set a few standards here. By saying that a space or spectrum is \textit{$n$-connective}, we mean that it has trivial homotopy groups \textit{below degree $n$}. This should be contrasted with saying that an object is $n$-\textit{connected}, which means that it has trivial homotopy groups in degrees \textit{less than or equal to $n$}. For example, any space $X$ is $0$-connective, but is only $0$-connected if $\pi_0(X)\cong 0$, and would be called $2$-connective if it were ``simply connected'' in the classical sense. When a spectrum is $0$-connective, we will just say that it is connective.  A morphism of spaces or spectra will be called \textit{$n$-connective} if it induces an isomorphism of homotopy groups in degress less than $n$. 

We work throughout with the \textit{little $k$-disks operads} $\E_k$ for $0\leq k\leq \infty$ and freely use the fact that $\E_1\simeq \A_\infty$. We also remind the reader that an $\E_0$-algebra in a monoidal quasicategory is nothing more than an object equipped with a map from the monoidal unit. Given an $\E_{k+1}$-ring spectrum $R$, there is an  $\E_{k}$-monoidal quasicategory of modules over $R$ which we denote $LMod_{R}$. There is also a quasicategory of $\E_k$-algebras in $LMod_{R}$ which we denote by $Alg_{R}^{\E_k}$. Given an $\E_k$-ring spectrum $R$ and $0\leq j<k$, there always exists an $\E_j$-ring spectrum which is equivalent to $R$ as a spectrum. When we wish to talk about this spectrum, we will simply say ``$R$ thought of as an $\E_j$-ring spectrum,'' or ``...as an $\E_j$-algebra.'' The above construction induces a forgetful functor $U_{\E_k}^{\E_j}\colon Alg_{\sph}^{\E_k}\to Alg_{\sph}^{\E_j}$. This functor preserves limits and so has a colimit preserving left adjoint which we will denote by $F_{\E_j}^{\E_k}$. Given a fixed $\E_k$-ring spectrum $R$, an $\E_j$-$R$-algebra $A$ for some $0<j<k$, and an integer $0<l<j$,  we will denote the ``free $\E_l$-$A$-algebra on an $A$-module $M$'' by $F^{\E_l}(M)$, where the empty subscript indicates that $M$ has no operadic structure. This also applies to the case $k=j=\infty$.

\subsection{Acknowledgements}

Finally, for this author at least, mathematics is a deeply social and communal pursuit. Many people, through e-mails, MathOverflow, and personal conversations, were helpful in understanding much of what follows. Some, but likely not all, of those people are: Bob Bruner, Andrew Salch, Nicolas Ricka, Gabriel Angelini-Knoll, Bogdan Gheorghe, Eric Peterson, Sean Tilson, Omar Antolin-Camarena, Tobias Barthel, Tom Bachmann, Tyler Lawson and Marc Hoyois. The work presented here also benefited \textit{immensely} from the comments of an anonymous referee.

\section{The Effect of Attaching a Structured Cell}\label{section:construction}

In what follows, given an element $\alpha\in\pi_d(A)$ for an $\E_{k+1}$-ring spectrum $A$, the phrase \emph{attach an $\E_k$-$A$-cell along $\alpha$} means to produce the $\E_k$-$A$-algebra $A\modd_{\E_k}\alpha$ defined by the following pushout in $\E_k$-$A$-algebras: 

$$\xymatrix{
F^{\E_k}(\Sigma^{d}A)\ar[r]^-{adj(0)}\ar[d]^-{adj(\bar{\alpha})}&A\ar[d]\\
A\ar[r]& A\modd_{\E_k}\alpha.
}$$

\noindent In the above diagram, $\bar{\alpha}$ is the morphism $\Sigma^dA\to A\wedge A\to A$ induced by $\alpha\colon\sph^d\to A$ and $adj(-)$ is the adjunction equivalence $LMod_A(\Sigma^d A,A)\overset{\sim}\to Alg_A^{\E_k}(F^{\E_k}(\Sigma^d A), A)$. The $\E_k$-$A$-algebra $A\modd_{\E_k}\alpha$ is called the ``versal $\E_k$-$A$-algebra of characteristic $\alpha$'' in \cite{acb} because it always admits a not-necessarily-unique map to any other $\E_k$-$A$-algebra on which multiplication by $\alpha$ is nullhomotopic.

Our goal is to prove a lemma about elements in homotopy that always survive attaching $\E_k$-cells. Intuitively what we will prove is that, given a connective $\E_{k+1}$-ring spectrum $A$ and $\alpha\in\pi_d(A)$ with induced self-map $\bar{\alpha}\colon\Sigma^dA\to A$, the spectrum $A\modd_{\E_k}\alpha$ has a filtration whose associated graded $A$-module is $cof(\bar{\alpha})\oplus D$ where $D$ is $2d$-connective. In particular, there is a map $cof(\bar{\alpha})\to A\modd_{\E_k}\alpha$ which is an isomorphism in $\pi_i$ for $i<2d$.

The main ideas behind the proof of the following Proposition \ref{prop:connofFM} were explained to me by Tyler Lawson. We remind the reader that an $\E_0$-$A$-algebra is an $A$-module $M$ equipped with a unit map $A\to M$. We also point out that we use $\oplus$ rather than $\coprod$ to denote the coproduct in the category $LMod_A$. 

\begin{prop}\label{prop:connofFM}
Let $A$ be an $\E_{k+1}$-ring spectrum and $F_{\E_0}^{\E_k}(M)$ be the free $\E_k$-algebra on an $\E_0$-$A$-algebra $M$ with unit map $u\colon A\to M$. If $u$ induces an isomorphism on homotopy in degrees less than or equal to $d$, then the universal map $\eta\colon M\to F_{\E_0}^{\E_k}(M)$ induced by the free forgetful adjunction is and isomorphism on homotopy in degrees less than or equal to $2d$.
\end{prop}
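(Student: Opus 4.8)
The plan is to run the \emph{arity} (or \emph{May}) filtration on the free $\E_k$-algebra $F_{\E_0}^{\E_k}(M)$. Write $\bar M := \mathrm{cof}(u\colon A\to M)$, regarded as an object of $LMod_A$. Since $u$ is an isomorphism on homotopy in degrees $\le d$, the long exact sequence of the cofiber sequence $A\xrightarrow{u}M\to\bar M$ shows that $\bar M$ is $(d+1)$-connective. The free $\E_k$-algebra then carries a natural exhaustive filtration
$$A=F_0\to F_1\to F_2\to\cdots,\qquad \colim_m F_m\simeq F_{\E_0}^{\E_k}(M),$$
with two key features: first, $F_1\simeq M$, with $F_0\to F_1$ the unit $u$ and the composite $M=F_1\to\colim_m F_m$ the universal map $\eta$; and second, the successive cofibers are
$$\mathrm{cof}(F_{m-1}\to F_m)\simeq\bigl(\E_k(m)_+\wedge\bar M^{\wedge_A m}\bigr)_{h\Sigma_m},\qquad m\ge 1,$$
where $\bar M^{\wedge_A m}$ is the $m$-fold smash power relative to $A$, carrying its diagonal $\Sigma_m$-action. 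When $M\simeq A\oplus\bar M$ is split this recovers the standard splitting $F^{\E_k}(\bar M)\simeq\bigoplus_{m\ge 0}(\E_k(m)_+\wedge\bar M^{\wedge_A m})_{h\Sigma_m}$; for a general $\E_0$-algebra one can extract the statement from the structural description of free $\E_k$-algebras (as in \cite{ha}, \cite{acb}), or derive it by resolving $M$ by split $\E_0$-algebras via the monadic bar construction, applying the colimit-preserving functor $F_{\E_0}^{\E_k}$, and transporting the arity filtration through the realization.

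Given the filtration, the rest is connectivity bookkeeping. Since $A$ is connective and $\bar M$ is $(d+1)$-connective, the relative smash power $\bar M^{\wedge_A m}$ is $m(d+1)$-connective, hence at least $(2d+2)$-connective for every $m\ge 2$; smashing with the space $\E_k(m)_+$ and forming $\Sigma_m$-homotopy orbits are colimit operations and do not lower connectivity, so $\mathrm{cof}(F_{m-1}\to F_m)$ is $(2d+2)$-connective for all $m\ge 2$. As $(2d+2)$-connective $A$-modules are closed under extensions and filtered colimits, induction on $m$ gives that each $\mathrm{cof}(F_1\to F_m)$, and therefore
$$\mathrm{cof}(\eta)\simeq\colim_m\mathrm{cof}(F_1\to F_m),$$
is $(2d+2)$-connective, i.e.\ has vanishing homotopy groups in degrees $\le 2d+1$.

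Finally, I would read off the conclusion from the long exact sequence in homotopy for the cofiber sequence $M\xrightarrow{\eta}F_{\E_0}^{\E_k}(M)\to\mathrm{cof}(\eta)$: the vanishing of $\pi_j(\mathrm{cof}(\eta))$ for all $j\le 2d+1$ forces $\eta_*\colon\pi_i(M)\to\pi_i(F_{\E_0}^{\E_k}(M))$ to be an isomorphism for all $i\le 2d$, which is the claim. The hard part is the first paragraph — specifically, knowing that the bottom stage of the arity filtration of the free $\E_k$-algebra on a \emph{not necessarily split} $\E_0$-algebra $M$ is $M$ itself, and that the higher graded pieces are controlled by the \emph{reduced} module $\bar M=\mathrm{cof}(u)$ rather than by $M$ (this is exactly what makes the $m\ge 2$ terms highly connected). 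Everything downstream of that identification is formal.
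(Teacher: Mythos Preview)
Your proposal is correct and follows essentially the same strategy as the paper: produce an arity filtration on $F_{\E_0}^{\E_k}(M)$ with $F_0=A$, $F_1=M$, and higher graded pieces $Sym_{\E_k}^n(\bar M)$, then do the connectivity bookkeeping. The paper in fact carries out exactly the second of the two derivations you sketch for the filtration---it resolves $M$ by the simplicial bar object $[n]\mapsto A\oplus A^{\oplus n}\oplus M$ (Lemma~\ref{barkan}), applies $F_{\E_0}^{\E_k}$ levelwise, and uses that $Sym_{\E_k}^n$ preserves sifted colimits (Lemma~\ref{lemma:tpcommuteswithsifted}) to identify the filtration quotients---so what you flag as ``the hard part'' is precisely where the paper spends its effort.
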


\begin{proof}
	Let $F^{\E_k}(M)$ be the free $\E_k$-algebra on the $R$-module $M$ (without its $\E_0$-structure). Note that $F^{\E_k}(M)\simeq F_{\E_0}^{\E_k}(A\oplus M)$ since $A\oplus M$ is the free $\E_0$-algebra on $M$ and $F^{\E_k}\circ F^{\E_0}\simeq F^{\E_k}$. Thus we have an equivalence (of underlying $A$-modules) $F_{\E_0}^{\E_k}(A\oplus M)\simeq \bigoplus_{n\geq 0}Sym_{\E_k}^n(M)$. As such, by passing along this equivalence, there is an exhaustive filtration of $F_{\E_0}^{\E_k}(A\oplus M)$ whose filtration quotients are equivalent to $Sym_{\E_k}^n(M)$. In other words, the associated graded of this filtration is equivalent to $F^{\E_k}(M)$. Notice also that, since $\oplus$ is the biproduct in $LMod_A$, a morphism $A\oplus X\to A\oplus Y$ is entirely determined by a pair of maps $X\to A$ and $X\to Y$, so the induced map $\bigoplus_{n\geq 0}Sym_{\E_k}^n(X)\to \bigoplus_{n\geq 0}Sym_{\E_k}^n(Y)$ must respect the filtration induced by the grading. Thus we have filtration preserving morphisms $F^{\E_k}(X)\to F^{\E_k}(Y)$ whenever we have maps $A\oplus X\to A\oplus Y$. 
	
	Noticing that the pushout of the diagram in $\E_0$-$A$-algebras (which can be computed in $A$-modules) $A\overset{1_A}{\leftarrow}A\overset{u}\to M$ must be equivalent to $M$, we can use Lemma \ref{barkan} to write $M$ as the colimit of the simplicial object $\{A\oplus M\leftleftarrows A\oplus A\oplus M\leftthreearrows\cdots\}$ with $n^{th}$ term equivalent to $A\oplus A^{\oplus n}\oplus M$. As a left adjoint, the functor $F_{\E_0}^{\E_k}$ commutes with colimits (where we are implicitly using that colimits of $\E_0$-algebras are computed in $LMod_A$), so we may write:
	\begin{align*}
	F_{\E_0}^{\E_k}(M)&\simeq colim(F_{\E_0}^{\E_k}(A\oplus M)\leftleftarrows F_{\E_0}^{\E_k}(A\oplus A\oplus M)\leftthreearrows\cdots)\\
	&\simeq colim(F^{\E_k}(M)\leftleftarrows F^{\E_k}(A\oplus M)\leftthreearrows\cdots).
	\end{align*}
	
	Now, applying our filtration levelwise and using the fact that the induced maps respect filtration, we have a filtered simplicial object with filtration quotients:
	\[
	Sym_{\E_k}^{n}(M)\leftleftarrows Sym_{\E_k}^n(A\oplus M)\leftthreearrows\cdots.
	\]
	
	Since each functor $Sym^n_{\E_k}$ preserves sifted colimits (of which simplicial resolutions are an example) by Lemma \ref{lemma:tpcommuteswithsifted}, we have that the filtration quotients of the induced filtration on $F_{\E_0}^{\E_k}(M)$ are equivalent to $Sym_{\E_k}^n(cof(u))$, i.e.~the associated graded of the induced filtration on $F_{\E_0}^{\E_k}(M)$ is $\bigoplus_{n\geq 0} Sym_{\E_k}^n(cof(u))$. Here we are using the fact that the colimit of the simplicial object $\{M\leftleftarrows A\oplus M\leftthreearrows\cdots\}$ is equivalent to (again by Lemma \ref{barkan}) the pushout of the diagram $0\overset{0}\leftarrow A\overset{u}\to M$, i.e.~the cofiber of $u$.
	
	Examining the filtration on this simplicial object more closely, and noticing that it arises from the ``symmetric powers'' filtration of $F^{\E_k}(A\oplus A^{\oplus n}\oplus M)$ at each simplicial degree, we see that its first two levels are the constant simplicial object:
	\[
	A\leftleftarrows A\leftthreearrows \cdots,
	\]
	and the constant simplicial object summed with the simplicial object we had before applying $F_{\E_0}^{\E_k}$, which has the form:
	\[
	A\oplus M\leftleftarrows A\oplus A\oplus M\leftthreearrows \cdots.
	\]
	These two objects have colimit $A$ and $M$ respectively. Thus we know that our filtration of $F_{\E_0}^{\E_k}(M)$ begins as $A\overset{u}\to M$. We know that the map from $A$ to $M$ is the unit map $u$ because $F_{\E_0}^{\E_k}(M)$ must have the same unit as $M$ (this also corresponds to the fact that the first filtration quotient must be equivalent to $cof(u)$). It is also clear that the filtration structure map $M\to F_{\E_0}^{\E_k}(M)$ must be the monadic unit map $\eta$. Hence the cofiber of the unit $\eta\colon M\to F_{\E_0}^{\E_k}(M)$ has a filtration with associated graded $\bigoplus_{n\geq 2} Sym_{\E_k}^n(cof(u))$. Thus the connectivity estimates of Lemma \ref{lemma:symconnected} give the result. 
\end{proof}

\begin{rem}
	There is nothing particular to the category $LMod_A$ that is necessary for the above constructions other than being a stable, cocomplete quasicategory and having a suitable notion of $n$-connective objects (i.e.~a $t$-structure). Thus the above is a relatively general description of the functor $F_{\E_0}^{\E_k}$ for any $k\geq 0$. 
\end{rem}

\begin{thm}\label{thm:presurvive}
	Let $A$ be a connective $\E_{k+1}$-ring spectrum and choose an element $\alpha\in\pi_d(A)$ for $d\geq 0$.  Then there is a map of $A$-modules $cof(\bar{\alpha})\to A\modd_{\E_k}\alpha$ which is $2d$-connective. 
\end{thm}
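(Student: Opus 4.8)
The plan is to reduce the theorem to Proposition~\ref{prop:connofFM} by identifying the $\E_k$-$A$-algebra $A\modd_{\E_k}\alpha$ with the free $\E_k$-algebra on a single $\E_0$-$A$-algebra, namely $cof(\bar{\alpha})$ equipped with the unit $u\colon A\to cof(\bar{\alpha})$ coming from the cofiber sequence $\Sigma^d A\xrightarrow{\bar{\alpha}}A\xrightarrow{u}cof(\bar{\alpha})$. First I would note that ``attaching an $\E_0$-cell'' is literally taking a cofiber: the $\E_0$-$A$-algebra $A\modd_{\E_0}\alpha$, defined as the pushout of $A\xleftarrow{(1_A,\bar{\alpha})}A\oplus\Sigma^d A\xrightarrow{(1_A,0)}A$, is computed in $LMod_A$ (colimits of $\E_0$-algebras being computed in modules), and a one-line change of basis on the target $A\oplus A$ identifies this pushout with $cof(\bar{\alpha})$ and its unit with $u$. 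Then, applying the colimit-preserving functor $F_{\E_0}^{\E_k}$ to this pushout square and using $F^{\E_k}(\Sigma^d A)\simeq F_{\E_0}^{\E_k}(A\oplus\Sigma^d A)$ together with $F_{\E_0}^{\E_k}(A)\simeq A$ (the free $\E_k$-algebra on the initial $\E_0$-algebra), one recovers precisely the defining pushout of $A\modd_{\E_k}\alpha$ --- the only thing to check being that the two legs become $adj(0)$ and $adj(\bar{\alpha})$, which follows from the triangle identities and the factorization $F^{\E_k}\simeq F_{\E_0}^{\E_k}\circ F^{\E_0}$. Hence $A\modd_{\E_k}\alpha\simeq F_{\E_0}^{\E_k}(cof(\bar{\alpha}))$ as $A$-modules. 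A slicker, Yoneda-flavored alternative is available: both $A\modd_{\E_k}\alpha$ and $F_{\E_0}^{\E_k}(cof(\bar{\alpha}))$ corepresent, on an $\E_k$-$A$-algebra $B$, the space of nullhomotopies of the image of $\alpha$ in $\pi_d(B)$.

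With this identification in hand, I would apply Proposition~\ref{prop:connofFM} --- or, more precisely, re-run its proof --- to $M=cof(\bar{\alpha})$. That proof builds an exhaustive filtration of $cof(\eta)$, where $\eta\colon cof(\bar{\alpha})\to F_{\E_0}^{\E_k}(cof(\bar{\alpha}))\simeq A\modd_{\E_k}\alpha$ is the monadic unit, with associated graded $\bigoplus_{n\geq 2}Sym_{\E_k}^n(cof(u))$. The key point is that here $cof(u)\simeq\Sigma^{d+1}A$, which is $(d+1)$-connective since $A$ is connective and $d\geq 0$; so by Lemma~\ref{lemma:symconnected} each $Sym_{\E_k}^n(\Sigma^{d+1}A)$ with $n\geq 2$ is $(2d+2)$-connective, and therefore so is $cof(\eta)$. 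Thus $\eta$ is an isomorphism on $\pi_i$ for $i<2d+2$, hence in particular $2d$-connective, which is exactly the asserted map. It is worth noting that Proposition~\ref{prop:connofFM} is phrased conservatively: applied purely as a black box it would require $u$ to be an isomorphism through degree $d$, whereas in general (when $\alpha\neq 0$) $u$ is only an isomorphism through degree $d-1$, so one must feed the sharper input $cof(u)\simeq\Sigma^{d+1}A$ into the argument to reach the full range $i<2d$.

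The connectivity bookkeeping in the second paragraph is routine given the machinery already developed. The genuine work --- and the step I expect to be the main obstacle --- is the identification $A\modd_{\E_k}\alpha\simeq F_{\E_0}^{\E_k}(cof(\bar{\alpha}))$, specifically matching the two legs of the defining pushout with $adj(0)$ and $adj(\bar{\alpha})$ and checking that the resulting unit maps agree; this is entirely formal but delicate, and the corepresentability argument is likely the cleanest way to make it airtight.
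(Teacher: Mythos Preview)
Your approach is exactly the paper's: it cites \cite[Lemma~4.4]{acb} for the identification $A\modd_{\E_k}\alpha\simeq F_{\E_0}^{\E_k}(cof(\bar{\alpha}))$ (which you reprove from scratch) and then invokes Proposition~\ref{prop:connofFM}. Your observation that the black-box statement of Proposition~\ref{prop:connofFM} falls one degree short here --- since $u\colon A\to cof(\bar{\alpha})$ is only an isomorphism in degrees $\leq d-1$, not $\leq d$ --- is correct and in fact sharper than the paper, which glosses over this; feeding $cof(u)\simeq\Sigma^{d+1}A$ directly into the filtration argument, as you do, is precisely the right fix and even yields $(2d+1)$-connectivity.
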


\begin{proof}
	By \cite[Lemma 4.4]{acb} we have that $A\modd_{\E_k}\alpha\simeq F_{\E_0}^{\E_k}(cof(\bar{\alpha}))$. Since $cof(\bar{\alpha})$ is the cofiber of a map $\bar{\alpha}\colon\Sigma^dA\to A$ which is $d$-connective, the unit map $A\to cof(\bar{\alpha})$ is also $d$-connective. The result then follows from Proposition \ref{prop:connofFM}. 
\end{proof}

\begin{cor}\label{survive}
	If $A$ is an $\E_{k+1}$-ring spectrum with $\alpha\in\pi_d(A)$ and $\pi_{d}(A\modd_{\E_k}\alpha)\cong 0$ then $\alpha$ generates $\pi_d(A)$ as a $\pi_0(A)$-module.  
\end{cor}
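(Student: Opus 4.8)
The plan is to combine Theorem~\ref{thm:presurvive} with the long exact sequence of the cofiber sequence defining $cof(\bar{\alpha})$. I will take $A$ to be connective and $d\geq 1$, as is needed to invoke that theorem in the relevant range. First I would apply Theorem~\ref{thm:presurvive} to obtain a $2d$-connective map of $A$-modules $cof(\bar{\alpha})\to A\modd_{\E_k}\alpha$. Since $d<2d$, this map induces an isomorphism on $\pi_d$, so the hypothesis $\pi_d(A\modd_{\E_k}\alpha)\cong 0$ forces $\pi_d(cof(\bar{\alpha}))\cong 0$.

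Next I would extract the meaning of $\pi_d(cof(\bar{\alpha}))$ from the cofiber sequence $\Sigma^d A\xrightarrow{\bar{\alpha}}A\to cof(\bar{\alpha})$. Its long exact sequence on homotopy groups contains the segment
\[
\pi_d(\Sigma^d A)\xrightarrow{(\bar{\alpha})_*}\pi_d(A)\to \pi_d(cof(\bar{\alpha}))\to \pi_{d-1}(\Sigma^d A).
\]
We have $\pi_d(\Sigma^d A)\cong \pi_0(A)$ and, by connectivity of $A$, $\pi_{d-1}(\Sigma^d A)\cong \pi_{-1}(A)\cong 0$. Moreover, since $\bar{\alpha}$ is by definition the composite $\Sigma^d A\to A\wedge A\to A$ determined by $\alpha$, the map $(\bar{\alpha})_*$ is multiplication by $\alpha$, i.e.\ the map $\pi_0(A)\to\pi_d(A)$, $x\mapsto \alpha x$. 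Hence the sequence identifies $\pi_d(cof(\bar{\alpha}))$ with the cokernel of multiplication by $\alpha$ on $\pi_0(A)\to\pi_d(A)$.

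Putting the two steps together, the cokernel of $x\mapsto\alpha x$ is trivial, which is precisely the assertion that every class in $\pi_d(A)$ is of the form $\alpha x$ with $x\in\pi_0(A)$; that is, $\alpha$ generates $\pi_d(A)$ as a $\pi_0(A)$-module. I do not expect a genuine obstacle here: the points requiring care are the standing hypotheses (connectivity of $A$, so that $\pi_{-1}(A)=0$ and so that Theorem~\ref{thm:presurvive} applies, and $d\geq 1$, so that $\pi_d$ lies strictly below the connectivity bound $2d$), together with the identification of the first map of the long exact sequence with multiplication by $\alpha$, which is immediate from the construction of the self-map $\bar{\alpha}$.
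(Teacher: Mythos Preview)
Your argument is correct and is essentially the same as the paper's: both invoke Theorem~\ref{thm:presurvive} to identify $\pi_d(A\modd_{\E_k}\alpha)\cong\pi_d(cof(\bar\alpha))$, then read off from the long exact sequence that $\pi_d(cof(\bar\alpha))\cong\pi_d(A)/\alpha\cdot\pi_0(A)$. Your explicit flagging of the standing hypotheses (connectivity of $A$ and $d\geq 1$, so that $d<2d$) is a welcome clarification, since the corollary as stated in the paper omits them even though they are needed to apply Theorem~\ref{thm:presurvive}.
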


\begin{proof}
	By Theorem \ref{thm:presurvive} there is a $2d$-connective map of $A$-modules $cof(\bar{\alpha})\to A\modd_{\E_k}\alpha$. By considering the long exact sequence in homotopy for the cofiber sequence $\Sigma^dA\overset{\bar{\alpha}}\to A\to cof(\bar{\alpha})$, we see that we have an exact sequence of $\pi_0(A)$-modules $$\pi_d(\Sigma^dA)\cong\pi_0(A)\overset{-\cdot\alpha}\to \pi_d(A)\to \pi_d(cof(\bar{\alpha}))\to\pi_{d-1}(\Sigma^dA)\cong 0.$$ From this it follows that $\pi_d(A\modd_{\E_k}\alpha)\cong\pi_d(cof(\bar{\alpha}))\cong \pi_d(A)/[\alpha]\pi_0(A)$. Thus if $\pi_d(A\modd_{\E_k}\alpha)\cong 0$ it must be the case that $\pi_d(A)$ is generated by $[\alpha]$ as a $\pi_0(A)$-module. 
\end{proof}

\section{$\pi_{2n-1}(X(n))$ and the Construction of $X(n)$ by Attaching a Cell}\label{xnspecif}

Corollary \ref{survive} immediately applies to $\pi_{2n-1}$ of the spectra $X(n)$:

\begin{cor}\label{generators}
The element $\chi_n\in\pi_{2n-1}(X(n))$, from \cite[Corollary 13]{beardsrelative}, generates $\pi_{2n-1}(X(n))$ for all $n\geq 1$. 
\end{cor}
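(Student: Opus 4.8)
The plan is to apply Corollary \ref{survive} directly, with $A = X(n)$, $k = 1$, and $\alpha = \chi_n$ the element in $\pi_{2n-1}(X(n))$ identified in \cite{beardsrelative}. To do so I need two ingredients: first, that $X(n)$ carries an $\E_2$-ring structure (so that it makes sense to attach an $\E_1$-cell along $\chi_n$ and Corollary \ref{survive} applies with $k+1 = 2$); and second, that the versal $\E_1$-$X(n)$-algebra of characteristic $\chi_n$ has vanishing $\pi_{2n-1}$.

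For the first ingredient, I would recall that $X(n)$ is the Thom spectrum of an $\E_2$-map $\Omega SU(n) \to BGL_1(\sph)$, since $\Omega SU(n)$ is a double loop space and the map arises from the inclusion $SU(n) \hookrightarrow SU$ together with Bott periodicity; by the multiplicative properties of the Thom spectrum functor this makes $X(n)$ an $\E_2$-ring spectrum. It is also connective. So Theorem \ref{thm:presurvive} and Corollary \ref{survive} are available with $d = 2n-1$.

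For the second ingredient, the key point from \cite{beardsrelative} is that $X(n+1) \simeq X(n) \modd_{\E_1} \chi_n$, i.e.\ $X(n+1)$ is exactly the versal $\E_1$-$X(n)$-algebra of characteristic $\chi_n$: this is the content of the discussion in the introduction recalling that the residual $\Omega^2 S^{2n-1}$-action on $X(n) \simeq \sph\modd\Omega^2 SU(n)$ exhibits $X(n+1)$ as obtained from $X(n)$ by attaching an $\E_1$-cell along $\chi_n$. Granting this, I would then invoke the classical computation (from \cite[\S 6.5]{rav}, or Lemma \ref{lem:equivbelow} with $k=1$) that the map $X(n+1) \to MU$ is an isomorphism on homotopy in degrees $\leq 2n-1$; since $\pi_{2n-1}(MU) = 0$ (the homotopy of $MU$ is concentrated in even degrees), we get $\pi_{2n-1}(X(n+1)) = \pi_{2n-1}(X(n)\modd_{\E_1}\chi_n) \cong 0$.

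With both ingredients in hand, Corollary \ref{survive} applied to $A = X(n)$, $\alpha = \chi_n$, $d = 2n-1$, $k = 1$ yields immediately that $\chi_n$ generates $\pi_{2n-1}(X(n))$ as a $\pi_0(X(n)) \cong \ints$-module, i.e.\ that $\pi_{2n-1}(X(n))$ is cyclic and generated by $\chi_n$. The main obstacle, such as it is, is not a computation but a bookkeeping matter: making sure the $\E_2$-structure on $X(n)$ is the one for which $X(n+1)$ is genuinely the versal $\E_1$-algebra $X(n)\modd_{\E_1}\chi_n$ — that is, that the geometric identification from \cite{beardsrelative} of $X(n+1)$ as a cell attachment is compatible with the operadic framework of \cite{acb} used in Section \ref{section:construction}. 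Once that compatibility is recorded, the corollary is a one-line consequence of Corollary \ref{survive}.
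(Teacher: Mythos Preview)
Your proposal is correct and follows exactly the paper's approach: cite the identification $X(n+1)\simeq X(n)\modd_{\E_1}\chi_n$ from \cite{beardsrelative}, observe that $\pi_{2n-1}(X(n+1))\cong\pi_{2n-1}(MU)=0$ via the comparison with $MU$ in low degrees, and apply Corollary~\ref{survive}. The extra care you take in recording the $\E_2$-structure on $X(n)$ and the compatibility between \cite{beardsrelative} and \cite{acb} is reasonable bookkeeping, but the paper simply leaves this implicit.
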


\begin{proof}
From \cite[Corollary 13]{beardsrelative} we know that $X(n+1)\simeq X(n)\modd_{\E_1}\chi_n$. From the paragraph following Proposition 6.5.4 of \cite{rav} (or by noticing that $X(n+1)$ is the Thom spectrum of $\Omega SU(n+1)\to BU$, which is an equivalence below degree $2n+1$), we have that $\pi_{2n-1}(X(n+1))\cong\pi_{2n-1}(MU)=0$. Hence, by Corollary \ref{survive}, $\chi_n$ must generate $\pi_{2n-1}(X(n))$. 
\end{proof}

Furthermore, if we localize at a prime, we need not have \emph{known} about the elements $\chi_n$ at all. In other words, we can attach an $\E_1$-cell along any generator of $\pi_{2n-1}$ and still produce (the $p$-localization of) $X(n+1)$. 

\begin{thm}\label{thm:canonical}
The spectrum $X(n+1)_{(p)}$ can be obtained from $X(n)_{(p)}$ by attaching an $\E_1$-$X(n)_{(p)}$-cell along any generator of the cyclic group $\pi_{2n-1}(X(n)_{(p)})$.
\end{thm}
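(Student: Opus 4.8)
The plan is to reduce the statement to the already‐established equivalence $X(n+1)\simeq X(n)\modd_{\E_1}\chi_n$ of \cite[Corollary 13]{beardsrelative} together with Corollary \ref{generators}, by showing that attaching an $\E_1$-$X(n)_{(p)}$-cell along an arbitrary generator of $\pi_{2n-1}(X(n)_{(p)})$ produces a spectrum equivalent to the one obtained by attaching it along the image of $\chi_n$. The first step is purely algebraic. Since $X(n)$ is the Thom spectrum of a map out of the connected space $\Omega SU(n)$, it is connective with $\pi_0(X(n))\cong\ints$, so $\pi_0(X(n)_{(p)})\cong\ints_{(p)}$ is a local ring. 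As $(-)_{(p)}$ is exact, $\pi_{2n-1}(X(n)_{(p)})\cong\pi_{2n-1}(X(n))\otimes_\ints\ints_{(p)}$, which by Corollary \ref{generators} is a cyclic $\ints_{(p)}$-module generated by the image of $\chi_n$ (which I continue to call $\chi_n$). Any two generators of a cyclic module over the local ring $\ints_{(p)}$ differ by multiplication by a unit; hence an arbitrary generator $\alpha$ satisfies $\alpha = u\cdot\chi_n$ for some $u\in\ints_{(p)}^\times$. (If $\pi_{2n-1}(X(n)_{(p)})=0$ then $\chi_n=0$ and there is nothing to prove.)

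Next I would translate this into a factorization of self‐maps. Working $p$-locally, the unit $u$ is realized by a self‐equivalence $\widetilde u\colon\Sigma^{2n-1}\sph_{(p)}\to\Sigma^{2n-1}\sph_{(p)}$, and the factorization $\alpha = \chi_n\circ\widetilde u$ of maps $\sph^{2n-1}\to X(n)_{(p)}$ induces, after smashing with $X(n)_{(p)}$ and composing with the multiplication, an identification $\bar\alpha\simeq\bar\chi_n\circ e$ in which $e\colon\Sigma^{2n-1}X(n)_{(p)}\to\Sigma^{2n-1}X(n)_{(p)}$ is an $X(n)_{(p)}$-module self‐equivalence. It is essential that this is a factorization on the \emph{source} side, so that the leg $X(n)_{(p)}\to X(n)_{(p)}\modd_{\E_1}\alpha$ of the defining pushout is left untouched.

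Then I would compare the two cell attachments. Applying $F^{\E_1}$ and using naturality of the adjunction equivalence $adj$ recalled in Section \ref{section:construction}, the pushout span defining $X(n)_{(p)}\modd_{\E_1}\alpha$ (with legs $adj(0)$ and $adj(\bar\alpha)$ out of $F^{\E_1}(\Sigma^{2n-1}X(n)_{(p)})$) is obtained from the analogous span for $\chi_n$ by precomposing both legs with the equivalence $F^{\E_1}(e)$, since $adj(\bar\chi_n)\circ F^{\E_1}(e)=adj(\bar\chi_n\circ e)=adj(\bar\alpha)$ and $adj(0)\circ F^{\E_1}(e)=adj(0\circ e)=adj(0)$. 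Precomposing both legs of a pushout span by a common equivalence does not change the pushout, so $X(n)_{(p)}\modd_{\E_1}\alpha\simeq X(n)_{(p)}\modd_{\E_1}\chi_n$. Finally, $p$-localization is smashing, hence symmetric monoidal and colimit preserving, so it commutes with $F^{\E_1}$ and with pushouts of $\E_1$-algebras and therefore with the construction $\modd_{\E_1}$; together with \cite[Corollary 13]{beardsrelative} this yields $X(n)_{(p)}\modd_{\E_1}\chi_n\simeq(X(n)\modd_{\E_1}\chi_n)_{(p)}\simeq X(n+1)_{(p)}$, completing the argument.

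The hard part, such as it is, is the bookkeeping in the previous paragraph: verifying that the $\alpha$-span is literally the $\chi_n$-span reindexed along $F^{\E_1}(e)$ (which uses both $0\circ e=0$ and the source‐side factorization of $\bar\alpha$), and confirming the compatibility of $(-)_{(p)}$ with forming $A\modd_{\E_1}(-)$. One can instead avoid the span manipulation entirely and argue through \cite[Lemma 4.4]{acb}, exactly as in the proof of Theorem \ref{thm:presurvive}: $X(n)_{(p)}\modd_{\E_1}\alpha\simeq F_{\E_0}^{\E_1}(cof(\bar\alpha))$, and $cof(\bar\alpha)\simeq cof(\bar\chi_n)$ as $\E_0$-$X(n)_{(p)}$-algebras because precomposing $\bar\chi_n$ with the module self‐equivalence $e$ alters neither the cofiber nor the unit map from $X(n)_{(p)}$; applying $F_{\E_0}^{\E_1}$ and then \cite[Corollary 13]{beardsrelative} finishes as before.
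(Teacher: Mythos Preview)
Your proof is correct and follows essentially the same route as the paper: localization commutes with the $\modd_{\E_1}$ construction, any two generators of a cyclic $\ints_{(p)}$-module differ by a unit, and via \cite[Lemma 4.4]{acb} this yields an equivalence $cof(\bar\alpha)\simeq cof(\bar\chi_{n,p})$ of $\E_0$-algebras, whence the free $\E_1$-algebras agree. The only cosmetic difference is that the paper realizes the unit as a self-equivalence of the \emph{target} $X(n)_{(p)}$ in a ladder of cofiber sequences (invoking the five lemma), whereas you realize it on the \emph{source} $\Sigma^{2n-1}X(n)_{(p)}$; your choice makes the $\E_0$-compatibility of the cofiber equivalence immediate, and your additional span-comparison argument is a valid alternative that the paper does not give.
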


\begin{proof}
Let $\chi_{n,p}$ be the image of $\chi_n$ under the $p$-localization map $\pi_{2n-1}(X(n))\to\pi_{2n-1}(X(n)_{(p)})\cong \pi_{2n-1}(X(n))_{(p)}$. From  \cite[Proposition 2.2.1.9]{ha} we know that the $p$-localization functor is symmetric monoidal, so preserves $\E_n$-algebras for all $n$. Additionally, from \cite[Remark 7.3.2.13]{ha}, and the results referenced in that remark, we know that this localization functor lifts to a left adjoint on categories of $\E_n$-algebras, so in particular preserves colimits of $\E_1$-algebras (for the analogous theorem in model categories, see \cite[Theorem 5.6]{batwhite}). Thus we know that cell attachment commutes with localization, so $X(n+1)_{(p)}$ is equivalent to $X(n)_{(p)}$ with an $\E_1$-$X(n)_{(p)}$-cell attached along $\chi_{n,p}$. 

Recall from \cite[Lemma 4.4]{acb} that the versal $\E_n$-algebra on a class $\alpha$ is equivalent to the free $\E_n$-algebra on the $\E_0$-algebra $cof(\bar{\alpha})$. This implies that $X(n+1)\simeq F_{\E_0}^{\E_1}(cof(\bar{\chi}_n))$. Since localization, as a left adjoint, commutes with free functors, and preserves algebraic structure as described above,  we can describe $X(n+1)_{(p)}$ as the free $\E_1$-$X(n)_{(p)}$-algebra induced up from the $\E_0$-$X(n)_{(p)}$-algebra $cof(\bar{\chi}_{n,p})$.

We now have that $\chi_{n,p}$ generates $\pi_{2n-1}(X(n)_{(p)})$ as a finite module over the local ring $\pi_0(X(n)_{(p)})\cong\mathbb{Z}_{(p)}$. Any finite cyclic $\mathbb{Z}_{(p)}$-module is isomorphic to $\mathbb{Z}/p^e$ for some $e$, and an element in $\mathbb{Z}_{(p)}$ represents a generator if and only if it is not divisible by $p$. But these are precisely the units of $\mathbb{Z}_{(p)}$, so any other generator must differ from $\chi_{n,p}$ by a unit of $\mathbb{Z}_{(p)}$. If $\alpha$ is another generator, call the associated unit $u_\alpha\in\pi_{0}(X(n)_{(p)})\cong \ints_{(p)}$. 

We have a commutative diagram whose rows are cofiber sequences:

\begin{center}
\begin{tabular}{c}

\xymatrix{
\Sigma^{2n-1}X(n)_{(p)}\ar@{=}[d]\ar[r]^{~~~~\bar{\chi}_{n,p}} & X(n)_{(p)}\ar[d]_{u_\alpha}\ar[r]& cof(\bar{\chi}_{n,p})\\
\Sigma^{2n-1}X(n)_{(p)}\ar[r]^{~~~~\bar{\alpha}} & X(n)_{(p)}\ar[r]& cof(\bar{\alpha}).
}
\end{tabular}
\end{center}

The universal property of the cofiber induces a map $\phi$ filling in the diagram: 

\begin{center}
\begin{tabular}{c}

\xymatrix{
\Sigma^{2n-1}X(n)_{(p)}\ar@{=}[d]\ar[r]^{~~~~\bar{\chi}_{n,p}} & X(n)_{(p)}\ar[d]_{u_\alpha}\ar[r]& cof(\bar{\chi}_{n,p})\ar[d]^{\phi}\\
\Sigma^{2n-1}X(n)_{(p)}\ar[r]^{~~~~\bar{\alpha}} & X(n)_{(p)}\ar[r]& cof(\bar{\alpha})
}
\end{tabular}
\end{center}
and an application of the 5-lemma in each degree of homotopy proves that $\phi$ is a homotopy equivalence. Notice also that the commutativity of the above diagram indicates that this is an equivalence of $\E_0$-$X(n)$-modules. 

It follows then that the free $\E_1$-algebras on the equivalent $\E_0$-algebras $cof(\bar{\alpha})$ and $cof(\bar{\chi}_{n,p})$ are equivalent. Hence, as described above, \cite[{Lemma 4.4}]{acb} implies that $$X(n)_{(p)}\modd_{\E_1}\alpha\simeq F_{\E_0}^{\E_1}(cof(\bar{\alpha}))\simeq X(n+1)_{(p)}.$$
\end{proof}

\begin{rem}
We note that all of the above, and the results of \cite{beardsrelative}, only succeed in producing $X(n+1)$ as an $\E_1$-$X(n)$-algebra. However, $X(n+1)$ is an $\E_2$-$\mathbb{S}$-algebra (though unpublished work of Lawson indicates that $X(n+1)$ is provably not $\E_3$), and this structure is not reflected in this work.
	
%
%
\end{rem}

\begin{rem}
Recalling that $X(1)\simeq \sph$, $\chi_1\simeq \eta$ and $cof(\eta)\simeq\Sigma^{-2}\mathbb{C}P^2$, we see that $X(2)$ is equivalent to the free $\E_1$-ring spectrum on the $\E_0$-spectrum $\Sigma^{-2}\mathbb{C}P^2$ (whose $\E_0$ structure is provided by the inclusion of the bottom cell $\sph\to\Sigma^{-2}\mathbb{C}P^2$). This should be compared to Hopkins' description of $X(2)_{(2)}$ in \cite{hopthesis}.
\end{rem}

\begin{rem}
We have from \cite{rav} that $H_\ast(X(n)_{(p)})\cong \mathbb{Z}_{(p)}[b_2,\ldots,b_{2n-2}]$ for all $n$. If there is no $m$ such that $n= p^m-1$, $X(n+1)_{(p)}$ is equivalent to an infinite wedge of suspended copies of $X(n)_{(p)}$. In particular, one has a copy of $X(n)_{(p)}$ for each power of the polynomial generator $b_n\in \pi_{2n-2}(X(n)_{(p)})$ \cite{ravprivate} (this can also be deduced from \cite[Chapter 6]{rav}). In other words, the fact that $H_\ast(X(n+1)_{(p)})$ is a polynomial algebra over $H_\ast(X(n)_{(p)})$ is manifested at the level of spectra whenever passing from $n$ to $n+1$ does not increase the largest power of $p$ such that $p^m\leq n$. 

This indicates that the $p$-localization of the element $\chi_n$, and hence the homotopy group it generates, is zero when $n$ is not one less than a power of a $p$. Theorem \ref{thm:canonical} implies that for $n=p^m-1$ for some $m$, every other generator of $\pi_{2n-1}(X(n))$ is of the form $r\chi_n$ for an integer $r$ which is not divisible by $p$. So in this case we can conclude that $\pi_{2n-1}(X(n))$ must have been $\ints/p^e$ for some $e$. We leave the detailed investigation of this structure, and these homotopy groups, to a future paper.
\end{rem}

\begin{rem}
	From \cite[Example 1.22]{ravinfdesc}, we can see that $\chi_{p-1}$ becomes (a sum of copies of) $\alpha_1$ when localized at $p$. It seems likely that the $p$-localization of $\chi_{p^m-1}$ recovers the $m^{th}$ Greek letter element of the $p$-localized stable homotopy groups of spheres.
\end{rem}

\section{$\E_k$-monoidal Analogues of $X(n)$}\label{section:xnk}

We now introduce a new sequence of spectra which are $\E_k$-monoidal, have colimit equivalent to $MU$, and which are (eventually) produced by attaching multiplicative cells. We generalize Corollary \ref{generators} and Theorem \ref{thm:canonical} to these new spectra.

\begin{defi}
For an odd integer $k$, let $X(n,k)$ be the sequence of spectra obtained by Thomifying (the zero components of) the sequence of $k+1$-fold loop spaces $$\ast\simeq\Omega^kSU(1)\to\Omega^kSU(2)\to\ldots\to\Omega^kSU(n)\to\ldots\to\Omega^kSU\simeq \mathbb{Z}\times BU.$$
\end{defi} 

\begin{rem}
From \cite[Theorem 2.8]{acb} we can deduce that the spectra $X(n,k)$ are $\E_{k+1}$-monoidal ring spectra. This is a classical result of Lewis \cite{lewis}, and is also shown specifically for Thom spectra in \cite[Theorem 1.7]{abg}. 
\end{rem}

\begin{thm}\label{chiks}
For $n\geq\frac{k-1}{2}$ there is an element $\chi_{n,k}\in\pi_{2n-k}(X(n,k))$ such that the spectrum $X(n+1,k)$ is equivalent to $X(n,k)$ with an $\E_k$-$X(n,k)$-cell attached along $\chi_{n,k}$. 
\end{thm}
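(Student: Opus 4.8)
The plan is to mimic the argument for the $k=1$ case (Corollary \ref{generators} together with \cite[Corollary 13]{beardsrelative}), but now using the $\E_k$-structure on the fibrations of loop spaces and the connectivity result of Theorem \ref{thm:presurvive}. The key geometric input is the fibration sequence $SU(n)\to SU(n+1)\to S^{2n+1}$, which on applying $\Omega^k$ yields a fiber sequence of $(k+1)$-fold loop spaces
\begin{equation*}
\Omega^k SU(n)\to \Omega^k SU(n+1)\to \Omega^k S^{2n+1}.
\end{equation*}
Since $\Omega^{k+1}SU(n)$ acts on $\sph$ through the Thom spectrum construction, and this action factors through $\Omega^{k+1}SU(n+1)$, one gets a residual action of the quotient $\Omega^{k+1}SU(n+1)/\Omega^{k+1}SU(n)\simeq \Omega^{k+1}S^{2n+1}$ on $X(n,k)\simeq \sph\modd\Omega^{k+1}SU(n)$. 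This is the $\E_k$-analogue of the $\E_1$-quotient description from \cite{beardsrelative}, and by the same reasoning as in the introduction it produces an $\E_k$-map $\Omega^k S^{2n+1}\to BGL_1(X(n,k))$, equivalently a pointed map $\Omega^{k-1}S^{2n+1}\to BGL_1(X(n,k))$, hence (via \cite[Definition 4.8]{acb}, since $\pi_0 \Omega^{k-1}S^{2n+1} = 0$ for $k$ odd and $2n+1 > k-1$, i.e.\ $n\geq (k-1)/2$) a canonical class $\chi_{n,k}\in\pi_{2n+1-k}(X(n,k))$. Wait — one must be careful with the degree: the bottom cell of $\Omega^{k-1}S^{2n+1}$ sits in degree $2n+1-(k-1) = 2n-k+2$, which would land $\chi_{n,k}$ in $\pi_{2n-k+1}$. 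I would instead set things up so that $\chi_{n,k}$ is the class in $\pi_{2n-k}$ coming from the map $S^{2n-k}\to BGL_1(X(n,k))$ adjoint to $\Omega^k S^{2n+1}\to BGL_1(X(n,k))$, matching the statement; reconciling the precise indexing is a bookkeeping point to settle carefully.

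Granting the existence of $\chi_{n,k}$, the identification $X(n+1,k)\simeq X(n,k)\modd_{\E_k}\chi_{n,k}$ should follow exactly as \cite[Corollary 13]{beardsrelative} does for $k=1$: the Thom spectrum functor sends the cofiber sequence of spaces (or rather the bar-construction quotient $\sph\modd\Omega^{k+1}SU(n)\to \sph\modd\Omega^{k+1}SU(n+1)$ with its residual $\Omega^{k+1}S^{2n+1}$-action) to the pushout of $\E_k$-$X(n,k)$-algebras defining $X(n,k)\modd_{\E_k}\chi_{n,k}$. Concretely, one uses that $X(n+1,k)\simeq X(n,k)\wedge_{\Sigma^\infty_+\Omega^{k+1}SU(n)}\Sigma^\infty_+\Omega^{k+1}SU(n+1)$ and that $\Sigma^\infty_+\Omega^{k+1}SU(n+1)$ is, as an $\E_k$-algebra under $\Sigma^\infty_+\Omega^{k+1}SU(n)$, the versal one of characteristic the class of $S^{2n-k}$; this is the $\E_k$-version of the argument already run once, so I would cite \cite{acb,beardsrelative} and indicate where the operadic degree shifts by $k$ rather than $1$.

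The main obstacle I expect is \emph{not} the cell-attachment identification but verifying that the residual action really lands in $\E_k$-$X(n,k)$-algebras and that the quotient computes the versal $\E_k$-algebra — i.e.\ promoting the $\E_1$-quotient story of \cite{beardsrelative} to the $\E_k$-setting. This requires knowing that $\Omega^{k+1}SU(n)\to\Omega^{k+1}SU(n+1)\to\Omega^{k+1}S^{2n+1}$ is a fiber sequence of $\E_{k}$-(in fact $\E_{k+1}$-)groups compatibly enough that the induced action of $\Omega^{k+1}S^{2n+1}$ on $X(n,k)$ is $\E_{k}$-monoidal and that $B$ of this action, via \cite[Theorem 2.8]{acb} and \cite[Definition 4.8]{acb}, produces the data needed for $\modd_{\E_k}$. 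The oddness hypothesis on $k$ and the lower bound $n\geq (k-1)/2$ enter precisely to guarantee $\Omega^{k-1}S^{2n+1}$ (or the relevant based loop space) is connected, so that the versal-algebra machine of \cite{acb} applies with a well-defined characteristic class in a single homotopy group. Once that structural input is in place, Theorem \ref{thm:presurvive} plus the vanishing $\pi_{2n-k}(X(n+1,k))\cong\pi_{2n-k}(MU)$ for $n$ in the stated range (by Lemma \ref{lem:equivbelow}, since $2n-k \leq 2n-k-1$ fails — so one uses instead that $X(n+1,k)\to MU$ is an isomorphism in degrees $\leq 2(n+1)-k-1 = 2n-k+1$, which covers $2n-k$) makes the cyclicity and uniqueness follow as in Corollary \ref{generators} and Theorem \ref{thm:canonical}.
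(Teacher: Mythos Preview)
Your approach is the paper's: use the fiber sequence $\Omega^k SU(n)\to\Omega^k SU(n+1)\to\Omega^k S^{2n+1}$, invoke \cite[Theorem~1]{beardsrelative} to obtain an $\E_k$-map $\Omega^k S^{2n+1}\to BGL_1(X(n,k))$ whose Thom spectrum is $X(n+1,k)$, and then apply \cite[Theorem~4.10]{acb}. Two points need correction.

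First, your degree bookkeeping goes astray because you try to pass to ``a pointed map $\Omega^{k-1}S^{2n+1}\to BGL_1(X(n,k))$,'' and later to ``$S^{2n-k}\to BGL_1(X(n,k))$''; neither is the relevant adjunction. The correct move is that for $2n+1\geq k$ one has $\Omega^k S^{2n+1}\simeq\Omega^k\Sigma^k S^{2n+1-k}$, the \emph{free $\E_k$-space} on $S^{2n+1-k}$. The free--forgetful adjunction therefore produces a pointed map $S^{2n+1-k}\to BGL_1(X(n,k))$, and one further loop--suspension adjunction (plus the unit translation of \cite[Definition~4.8]{acb}) gives $S^{2n-k}\to GL_1(X(n,k))\hookrightarrow\Omega^\infty X(n,k)$, hence $\chi_{n,k}\in\pi_{2n-k}(X(n,k))$. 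The target is $GL_1$, not $BGL_1$, in that last step.

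Second, the ``main obstacle'' you anticipate---promoting the $\E_1$-quotient story of \cite{beardsrelative} to $\E_k$---is not an obstacle at all: \cite[Theorem~1]{beardsrelative} is already stated and proved for general $\E_k$, and \cite[Theorem~4.10]{acb} already identifies the Thom spectrum of a map out of a free $\E_k$-space with the corresponding versal $\E_k$-algebra. The paper simply cites these and is done. Your final paragraph, invoking Theorem~\ref{thm:presurvive} and Lemma~\ref{lem:equivbelow} to deduce cyclicity, is not part of the proof of this theorem; that argument is the content of the subsequent Corollary~\ref{cor:emu1}.
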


\begin{proof}
Fix $n$ and $k$ and assume $n\geq \frac{k-1}{2}$. Recall that there is a fiber sequence $SU(n)\to SU(n+1)\to S^{2n+1}$. By applying the based loops functor $k$ times we obtain a fiber sequence $\Omega^kSU(n)\to \Omega^k SU(n+1)\to \Omega^kS^{2n+1}$. If $2n+1\geq k$ then the base space of the preceding fibration is equivalent to the free $\E_k$-algebra on $S^{2n+1-k}$. By \cite[Theorem 1]{beardsrelative}  there is a morphism of $\E_{k}$-spaces $\phi\colon\Omega^k\Sigma^kS^{2n+1-k}\to BGL_1(X(n,k))$ whose associated $X(n,k)$-algebra Thom spectrum is equivalent to $X(n+1,k)$ as an $\E_k$-$X(n,k)$-algebra. 

Since the domain of the map $\phi$ is the free $\E_k$-algebra on $S^{2n+1-k}$, we have, by the free-forgetful adjunction, a map of based spaces $\phi'\colon S^{2n+1-k}\to BGL_1(X(n,k)).$ By the loops-suspension adjunction, and the fact that $GL_1(X(n,k))$ is equivalent to $\Omega BGL_1(X(n,k))$, we thus have a map of based spaces $\phi''\colon S^{2n-k}\to GL_1(X(n,k))$. Note that, since $GL_1(X(n,k))$ is composed of the connected components of the units in $\pi_0(\Omega^\infty X(n,k))\cong \ints$, the inclusion $i\colon GL_1(X(n,k))\hookrightarrow \Omega^\infty X(n,k)$ is not based. In particular, the base point of $GL_1(X(n,k))$ corresponds to 1 and the base point of $\Omega^\infty X(n,k)$ corresponds to 0. Let $u=\pm 1$ be the connected component containing the image of $\phi''$. Then there is a translation map $\tau_u\colon \Omega^\infty X(n,k)\to \Omega^\infty X(n,k)$ such that the composition $\tau_u\circ i\circ \phi''$ is a based map $S^{2n-k}\to \Omega^\infty X(n,k)$. Finally, by the adjunction between $\Omega^\infty$ and $\Sigma^\infty$, we obtain a map of spectra $\sph^{2n-k}\to X(n,k)$. This defines our element $\chi_{n,k}\in\pi_{2n-k}(X(n,k))$. An application of \cite[Theorem 4.10]{acb} gives that $X(n+1,k)$, the Thom spectrum of the map $\phi$ that we began with, is equivalent to $X(n,k)\modd_{\E_k}\chi_{n,k}$. 
\end{proof}

\begin{cor}
	For $n\geq\frac{k-1}{2}$, there is a Thom isomorphism of $\E_k$-$X(n,k)$-algebras $$X(n+1,k)\wedge_{X(n,k)}X(n+1,k)\simeq X(n+1,k)\wedge_\sph \Omega^k\Sigma^k S^{2n+1-k}.$$ 
\end{cor}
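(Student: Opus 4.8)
The plan is to deduce this Thom isomorphism directly from the construction of $X(n+1,k)$ as an $\E_k$-$X(n,k)$-algebra Thom spectrum carried out in the proof of Theorem \ref{chiks}. Recall from there that $X(n+1,k)$ is the Thom spectrum of the $\E_k$-map $\phi\colon \Omega^k\Sigma^k S^{2n+1-k}\to BGL_1(X(n,k))$, where we are using that $\Omega^k S^{2n+1}\simeq \Omega^k\Sigma^k S^{2n+1-k}$ is the free $\E_k$-algebra on $S^{2n+1-k}$ (valid since $2n+1\geq k$, which follows from $n\geq\frac{k-1}{2}$). First I would recall the general principle, in the form proved in \cite{acb} (or \cite{abg}), that for an $\E_k$-map $f\colon G\to BGL_1(R)$ from a grouplike $\E_k$-space, the Thom $R$-algebra $Mf$ carries a canonical Thom isomorphism $Mf\wedge_R Mf \simeq Mf\wedge_\sph (G_+)$, where $G_+$ denotes the suspension spectrum of $G$ with a disjoint basepoint; more precisely, the relative smash $Mf\wedge_R Mf$ is the Thom spectrum of the composite $G\times G\to BGL_1(R)$ that only remembers the second coordinate after using the first to trivialize, which untwists to $Mf\wedge_\sph \Sigma^\infty_+ G$.

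Concretely, I would proceed as follows. Set $R=X(n,k)$, $G=\Omega^k\Sigma^k S^{2n+1-k}$, and $Mf=X(n+1,k)$. Consider the pullback square of $\E_k$-spaces expressing $G\times G$ as the fiber product of $G\xrightarrow{f} BGL_1(R)\xleftarrow{\ast} \ast$ base-changed along $G\to \ast$ — equivalently, use that the "multiplication then projection" description of $X(n+1,k)\wedge_{R}X(n+1,k)$ realizes it as the Thom spectrum over $G\times G$ of the map $(g_1,g_2)\mapsto f(g_1)$. Since this map factors through the projection $G\times G\to G$, the Thom spectrum splits off the $G$-direction: it is $Mf\wedge_\sph \Sigma^\infty_+ G$. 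Finally, identify $\Sigma^\infty_+ G = X(n+1,k)\wedge_\sph \Sigma^\infty_+(\Omega^k\Sigma^k S^{2n+1-k})$ with the right-hand side of the stated equivalence (noting the paper's convention of writing $\wedge_\sph$ with the space $\Omega^k\Sigma^k S^{2n+1-k}$ to mean smashing with its suspension spectrum). One should also check that all the equivalences are compatible with the $\E_k$-$X(n,k)$-algebra structures, which follows because every step is performed in $\E_k$-spaces and the Thomification functor is symmetric monoidal / $\E_k$-monoidal as used already in Theorem \ref{chiks}.

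The main obstacle I anticipate is making the "Thom diagonal" / base-change argument precise at the level of $\E_k$-algebras in the quasicategorical setting, rather than invoking it as folklore. Specifically, one needs that for a map of $\E_k$-monoids $f\colon G\to BGL_1(R)$, the relative tensor $Mf\otimes_R Mf$ computes the Thom spectrum of the induced bundle over $G\times G$ classified by $\mathrm{pr}_1^*f$, and that the latter untwists because $\mathrm{pr}_1^*f$ is pulled back along a projection. In the framework of \cite{acb} and \cite{beardsrelative} the Thom spectrum functor is the left Kan extension / colimit of the $J$-homomorphism local system, so this reduces to the statement that $\colim$ over $G\times G$ of a constant-in-the-second-variable diagram is $\left(\colim_G\right)\wedge \Sigma^\infty_+ G$, together with the fact that this colimit computes the relative smash product — essentially the projection formula for Thom spectra. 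I would cite the relevant statement from \cite{abg} or \cite{acb} if available; otherwise a short argument via the bar construction computing $Mf\wedge_R Mf \simeq |Mf\wedge_\sph R^{\wedge\bullet}\wedge_\sph Mf|$ and the levelwise Thom isomorphism should suffice, though I expect the cleanest route is to quote the multiplicative Thom isomorphism theorem directly and simply observe it applies here with $G=\Omega^k\Sigma^k S^{2n+1-k}$.
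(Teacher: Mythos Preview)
Your proposal is correct and takes essentially the same approach as the paper: both reduce immediately to the general multiplicative Thom isomorphism for the Thom spectrum of an $\E_k$-map $G\to BGL_1(R)$, applied with $R=X(n,k)$ and $G=\Omega^k\Sigma^k S^{2n+1-k}$. The paper simply cites \cite[Theorem 8.4]{abg} together with \cite[Corollary 3.17]{acb} and observes that $X(n+1,k)$ is oriented with respect to itself, which is exactly the ``cleanest route'' you anticipate at the end of your proposal; your extra discussion of the untwisting via the projection $G\times G\to G$ is a correct unpacking of what those citations contain, but is not needed once you invoke them.
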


\begin{proof}
	This is precisely the Thom isomorphism of \cite[Theorem 8.4]{abg} used along with \cite[Corollary 3.17]{acb}. In particular, since $X(n+1,k)$ is a Thom spectrum associated to a map of $\E_k$-spaces $\Omega^k\Sigma^kS^{2n+1-k}\to BGL_1(X(n,k))$ it is oriented with respect to itself, so admits a Thom isomorphism. 
\end{proof}

We emulate Corollary \ref{generators} and Theorem \ref{thm:canonical} below as Corollaries \ref{cor:emu1} and \ref{cor:emu2} after confirming a fact about $X(n,k)$ that was shown for $X(n)=X(n,1)$ in \cite{rav}.

\begin{lem}\label{lem:equivbelow}
For odd $k$, the map $X(n,k)\to MU$ induced by the inclusion $\Omega^k SU(n)\hookrightarrow BU$ is an isomorphism on homotopy groups in degrees less than or equal to $2n-k-1$. 
\end{lem}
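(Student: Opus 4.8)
The plan is to reduce the statement to a comparison of the underlying spaces $\Omega^k SU(n)$ and $\Omega^k SU \simeq \mathbb{Z}\times BU$, via the Thom spectrum functor. Since $X(n,k)$ is the Thom spectrum of the zero component of $\Omega^k SU(n)\to BGL_1(\sph)$ and $MU$ is the Thom spectrum of the zero component of $BU\simeq \{0\}\times BU\hookrightarrow \Omega^k SU\to BGL_1(\sph)$, the map in question is induced by applying the Thom spectrum functor to the (zero component of the) inclusion $\iota\colon\Omega^k SU(n)\hookrightarrow \Omega^k SU$. The key input is that the Thom spectrum functor, being a colimit, preserves connectivity of maps: if a map of spaces over $BGL_1(\sph)$ is $m$-connective, then the induced map of Thom spectra is $m$-connective. (Concretely, one can see this on homology via the Thom isomorphism and the Atiyah–Hirzebruch / cellular filtration, or more structurally from the fact that Thomification is a left adjoint and hence commutes with the relevant colimits computing connectivity.) So it suffices to show that $\iota$ is $(2n-k)$-connective, i.e.\ induces an isomorphism on $\pi_i$ for $i\le 2n-k-1$ (and a surjection on $\pi_{2n-k}$, which we don't even need for the stated bound).

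The second step is the space-level connectivity estimate. Here I would use the fiber sequence $\Omega^k SU(n)\to \Omega^k SU(n+1)\to \Omega^k S^{2n+1}$ obtained by looping the standard fibration $SU(n)\to SU(n+1)\to S^{2n+1}$ exactly $k$ times, as is already done in the proof of Theorem~\ref{chiks}. The fiber sequence shows $\iota_{n}\colon \Omega^k SU(n)\to\Omega^k SU(n+1)$ is an isomorphism on $\pi_i$ for $i$ below the connectivity of $\Omega^k S^{2n+1}$, which is $(2n+1-k)$-connected, hence $\Omega^k S^{2n+1}$ has trivial homotopy in degrees $\le 2n-k$; from the long exact sequence, $\iota_n$ is an isomorphism on $\pi_i$ for $i\le 2n-k-1$ and a surjection on $\pi_{2n-k}$. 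Composing these maps for $n, n+1, n+2,\dots$, and noting that the connectivity bound $2n-k-1$ is (weakly) increasing, we get that the infinite composite $\Omega^k SU(n)\to \colim_m \Omega^k SU(m)\simeq \Omega^k SU\simeq \mathbb{Z}\times BU$ is an isomorphism on $\pi_i$ for $i\le 2n-k-1$. Since $2n-k-1\ge 0$ precisely in the range of interest (and when it is negative there is nothing to prove), passing to the zero component does not affect anything.

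Finally I would assemble the two pieces: the zero-component map of spaces $\Omega^k SU(n)_0\to BU$ is $(2n-k)$-connective by the previous paragraph, so applying the Thom spectrum functor and invoking its connectivity-preservation gives that $X(n,k)\to MU$ is an isomorphism on $\pi_i$ for $i\le 2n-k-1$, as claimed. The oddness of $k$ is used exactly where Theorem~\ref{chiks} uses it — to ensure $2n+1\ge k$ in the relevant range so that the iterated loop space $\Omega^k S^{2n+1}$ is genuinely highly connected and the free-$\E_k$-algebra description is available — though for the bare connectivity statement one really just needs $\Omega^k S^{2n+1}$ to be $(2n-k)$-connected, which holds as long as $2n+1-k\ge 0$. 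The main obstacle, such as it is, is being careful about the connectivity-preservation property of Thomification: I would either cite the homological version (Thom isomorphism plus the fact that an $m$-connective map induces $m$-connective map on $H_*$, then Hurewicz on simply-connected covers / the fact that the spectra are connective) or appeal to the general principle that the Thom spectrum functor $BGL_1(\sph)_{/-}\to \mathrm{Sp}$ of \cite{abg} preserves colimits and filtered colimits, so preserves the cellular filtrations witnessing connectivity.
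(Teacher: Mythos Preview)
Your proposal is correct and follows essentially the same route as the paper: establish the connectivity of $\Omega^k SU(n)\to BU$ at the space level, then argue that Thomification preserves this connectivity via the Thom isomorphism and Hurewicz. The paper makes one step you leave implicit more explicit: it identifies $cof(Th(i))\simeq Th(cof(i))$ and then justifies the integral Thom isomorphism on all three pieces by observing that the composite $BU\to BGL_1(\sph)\to BGL_1(H\ints)\simeq\mathbb{R}P^\infty$ is null (since $H^1(BU;\ints/2)=0$), which is exactly the orientation input your Thom-isomorphism sketch needs.
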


\begin{proof}
First recall that the inclusion $SU(n)\hookrightarrow SU$ is an equivalence in degrees less than $2n$. It follows then that, after applying $\Omega^k$, for odd $k$, $\Omega^k SU(n)\to BU$ is an equivalence in degrees less than $2n-k$. 

Now recall that the Thom spectrum functor (being computable as a colimit itself as in \cite{abghr} or as a left adjoint as in \cite{abg}) preserves colimits. Thus the cofiber of the map $Th(i)\colon X(n,k)\to MU$ is the Thom spectrum of the cofiber of $i\colon\Omega^k SU(n)\to BU$ (taken in the category of spaces over $BU$). Note that cofibers in overcategories are computed in the underlying categories, so the homotopy groups of $cof(i)$ as a space are isomorphic to the homotopy groups of $cof(i)$ as a space \emph{over $BU$}. Clearly the space $cof(i)$ has trivial homotopy groups in degrees less than $2n-k$. 

Consider the pointed map $BU\to BGL_1(\sph)\to BGL_1(H\ints)$ given by the complex $J$-homomorphism followed by the delooping of the canonical map $GL_1(\sph)\to GL_1(H\ints)$. This map is null because $BGL_1(H\ints)\simeq \mathbb{R}P^\infty$ and $H^1(BU;\ints/2)=0$. So we have, via the orientation theory of \cite{abghr}, the following three equivalences: $H\ints\wedge X(n,k)\simeq H\ints\wedge \Sigma^\infty_+\Omega^k SU(n)$, $H\ints \wedge MU\simeq H\ints\wedge \Sigma^\infty_+BU$ and $H\ints \wedge Th(cof(i))\simeq H\ints\wedge \Sigma^\infty_+ cof(i)$. In other words, we have Thom isomorphisms in integral homology for all three Thom spectra. Since we know that $cof(i)$ must have trivial homotopy groups in degrees less than $2n-k$, it also has trivial homology in those degrees. Hence $Th(cof(i))$ also has trivial homology in those degrees and thus trivial homotopy there as well (as it is connective). So the Thom spectrum functor preserves connectivity of the map $\Omega^kSU(n)\to BU$, causing the map $X(n,k)\to MU$ to be an equivalence in degrees less than $2n-k$.   
\end{proof}

\begin{rem}\label{hencetrivial}
It follows from the above, and the fact that $\pi_\ast(MU)$ is trivial in odd degrees, that $\pi_{2n-k}(X(n+1,k))=0$ because $2n-k$ is odd and $2n-k<2(n+1)-k$. 
\end{rem}

\begin{cor}\label{cor:emu1}
The elements $\chi_{n,k}$ described in Theorem \ref{chiks} generate $\pi_{2n-k}(X(n,k))$ as abelian groups.
\end{cor}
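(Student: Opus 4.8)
The plan is to mimic exactly the argument of Corollary \ref{generators}, feeding the data of Theorem \ref{chiks}, Lemma \ref{lem:equivbelow} and Remark \ref{hencetrivial} into Corollary \ref{survive}. First I would invoke Theorem \ref{chiks}, which (for $n\geq\frac{k-1}{2}$) gives an element $\chi_{n,k}\in\pi_{2n-k}(X(n,k))$ together with an equivalence $X(n+1,k)\simeq X(n,k)\modd_{\E_k}\chi_{n,k}$. Next I would observe that $X(n,k)$ is a connective $\E_{k+1}$-ring spectrum (it is $\E_{k+1}$-monoidal by the remark after the definition, and connective since it is the Thom spectrum of a map out of a connected space into $BGL_1(\sph)$, hence of the connected space $\Omega^k SU(n)$), so Corollary \ref{survive} applies with $A=X(n,k)$, $d=2n-k$, and $\alpha=\chi_{n,k}$.

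The remaining input is that $\pi_{2n-k}(X(n,k)\modd_{\E_k}\chi_{n,k})=\pi_{2n-k}(X(n+1,k))=0$; this is precisely Remark \ref{hencetrivial}, which follows from Lemma \ref{lem:equivbelow} (so that $\pi_{2n-k}(X(n+1,k))\cong\pi_{2n-k}(MU)$, since $2n-k<2(n+1)-k-1+1$, i.e. $2n-k\leq 2(n+1)-k-1$) together with the vanishing of $\pi_\ast(MU)$ in odd degrees — and $2n-k$ is odd because $k$ is odd. Then Corollary \ref{survive} gives that $\chi_{n,k}$ generates $\pi_{2n-k}(X(n,k))$ as a $\pi_0(X(n,k))$-module; since $\pi_0(X(n,k))\cong\ints$ (as $\Omega^k SU(n)$ is connected and maps to $0\in\ints\times BU$), being a generator as a $\ints$-module is the same as being a generator as an abelian group, which is the claim.

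I would also remark that one should double-check the range hypothesis: Corollary \ref{cor:emu1} as stated carries no explicit restriction on $n$, but the construction of $\chi_{n,k}$ in Theorem \ref{chiks} requires $n\geq\frac{k-1}{2}$, so the statement should be read (or amended to be read) under that hypothesis. The only genuine subtlety — and the step I expect to require the most care — is verifying that Remark \ref{hencetrivial} really does cover the degree $d=2n-k$ on the nose: Lemma \ref{lem:equivbelow} gives an iso in degrees $\leq 2(n+1)-k-1 = 2n-k+1$ for $X(n+1,k)\to MU$, and $2n-k\leq 2n-k+1$, so this is fine; the point is just to confirm the index bookkeeping, after which everything is a direct citation. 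No separate computation of the homotopy groups of $\Omega^k SU(n)$ is needed.
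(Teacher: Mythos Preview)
Your proposal is correct and follows essentially the same route as the paper's own proof: invoke Theorem \ref{chiks} to get $X(n+1,k)\simeq X(n,k)\modd_{\E_k}\chi_{n,k}$, use Remark \ref{hencetrivial} to see $\pi_{2n-k}(X(n+1,k))=0$, apply Corollary \ref{survive}, and finish by noting $\pi_0(X(n,k))\cong\ints$. Your extra care with the index bookkeeping and the observation that the hypothesis $n\geq\frac{k-1}{2}$ from Theorem \ref{chiks} is implicitly in force are both apt, but otherwise there is nothing to add.
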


\begin{proof}
Since $X(n+1,k)\simeq X(n,k)\modd_{\E_k}\chi_{n,k}$, this follows immediately from Theorem \ref{survive} and Remark \ref{hencetrivial}. We are also using the fact that $\pi_0(X(n,k))\cong\ints$. Note that this corollary is entirely analogous to Corollary \ref{generators}.
\end{proof}

\begin{cor}\label{cor:emu2}
If $p\in\ints$ is a prime and $n\geq \frac{k-1}{2}$ then $X(n+1,k)_{(p)}$ is equivalent to $X(n,k)_{(p)}$ with an $\E_k$-cell attached along any generator of $\pi_{2n-k}(X(n,k)_{(p)}$. 
\end{cor}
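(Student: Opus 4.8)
The plan is to run the proof of Theorem \ref{thm:canonical} essentially verbatim, with $X(n)$ replaced by $X(n,k)$, the degree $2n-1$ replaced by $2n-k$, and $\E_1$ replaced by $\E_k$. First I would record that $p$-localization is symmetric monoidal by \cite[Proposition 2.2.1.9]{ha} and lifts to a colimit-preserving left adjoint on categories of $\E_k$-algebras by \cite[Remark 7.3.2.13]{ha}; since $X(n,k)$ is an $\E_{k+1}$-ring spectrum, the formation of $X(n,k)\modd_{\E_k}(-)$ (a pushout of $\E_k$-$X(n,k)$-algebras) therefore commutes with $p$-localization. Combining this with Theorem \ref{chiks} gives $X(n+1,k)_{(p)}\simeq X(n,k)_{(p)}\modd_{\E_k}\chi_{n,k,p}$, where $\chi_{n,k,p}$ is the image of $\chi_{n,k}$ under $\pi_{2n-k}(X(n,k))\to\pi_{2n-k}(X(n,k)_{(p)})\cong\pi_{2n-k}(X(n,k))_{(p)}$, and by \cite[Lemma 4.4]{acb} this is in turn equivalent to $F_{\E_0}^{\E_k}(cof(\bar\chi_{n,k,p}))$.

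Next I would invoke Corollary \ref{cor:emu1}, which says $\chi_{n,k}$ generates the cyclic group $\pi_{2n-k}(X(n,k))$, so $\chi_{n,k,p}$ generates $\pi_{2n-k}(X(n,k)_{(p)})$ as a module over $\pi_0(X(n,k)_{(p)})\cong\ints_{(p)}$. Because $\ints_{(p)}$ is local, any two generators of a cyclic $\ints_{(p)}$-module differ by multiplication by a unit of $\ints_{(p)}$ (if $x=ry$ and $y=sx$ then $rs-1$ annihilates $x$, hence lies in the maximal ideal, so $r$ is a unit); thus for an arbitrary generator $\alpha$ there is a unit $u_\alpha\in\pi_0(X(n,k)_{(p)})^\times$ with $[\alpha]=u_\alpha[\chi_{n,k,p}]$. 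I would then form the map of cofiber sequences
\[
\xymatrix{
\Sigma^{2n-k}X(n,k)_{(p)}\ar@{=}[d]\ar[r]^-{\bar\chi_{n,k,p}}&X(n,k)_{(p)}\ar[d]_{u_\alpha}\ar[r]&cof(\bar\chi_{n,k,p})\ar[d]^{\phi}\\
\Sigma^{2n-k}X(n,k)_{(p)}\ar[r]^-{\bar\alpha}&X(n,k)_{(p)}\ar[r]&cof(\bar\alpha),
}
\]
with $\phi$ produced by the universal property of the cofiber, and apply the $5$-lemma degreewise in homotopy to see $\phi$ is an equivalence; commutativity of the left square (a map of unital objects) shows $\phi$ is an equivalence of $\E_0$-$X(n,k)_{(p)}$-algebras.

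Finally, since $F_{\E_0}^{\E_k}$ is a left adjoint it carries this equivalence of $\E_0$-algebras to an equivalence of $\E_k$-$X(n,k)_{(p)}$-algebras, so
\[
X(n,k)_{(p)}\modd_{\E_k}\alpha\;\simeq\;F_{\E_0}^{\E_k}(cof(\bar\alpha))\;\simeq\;F_{\E_0}^{\E_k}(cof(\bar\chi_{n,k,p}))\;\simeq\;X(n+1,k)_{(p)},
\]
again using \cite[Lemma 4.4]{acb} for the outer equivalences. The only genuinely new ingredient beyond Theorem \ref{thm:canonical} is Corollary \ref{cor:emu1} (cyclicity of $\pi_{2n-k}(X(n,k))$) together with $\pi_0(X(n,k))\cong\ints$; everything else is formal, and I expect the only point requiring care is the bookkeeping that each equivalence in sight is an equivalence of the appropriate structured object — first of $\E_0$-$X(n,k)_{(p)}$-algebras and then, after applying $F_{\E_0}^{\E_k}$, of $\E_k$-$X(n,k)_{(p)}$-algebras — exactly as in the proof of Theorem \ref{thm:canonical}.
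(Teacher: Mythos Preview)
Your proposal is correct and takes exactly the same approach as the paper: the paper's own proof of this corollary is the single sentence ``The proof proceeds completely identically to the proof of Theorem \ref{thm:canonical},'' and you have faithfully transcribed that argument with $X(n)$, $2n-1$, and $\E_1$ replaced by $X(n,k)$, $2n-k$, and $\E_k$, invoking Theorem \ref{chiks} and Corollary \ref{cor:emu1} in place of their $k=1$ analogues. One small slip: it is the commutativity of the \emph{right} square (the one involving the quotient maps $X(n,k)_{(p)}\to cof(-)$), not the left, that witnesses compatibility of $\phi$ with the $\E_0$-unit maps.
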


\begin{proof}
The proof proceeds completely identically to the proof of Theorem \ref{thm:canonical}. 
\end{proof}

\begin{rem}
Recall that the spectra $X(n,1)=X(n)$ have an interpretation in terms of truncated formal group laws, or \emph{formal $n$-buds}, as described in \cite[\S 6.5]{rav}. The author is not aware of any similar interpretation of the spectra $X(n,k)$ for $k>1$. Such an interpretation, even for $k=3$ in light of the appearance of the space $\Omega^3 SU(n)$ in the proof of the Nilpotence Conjecture, would likely be enlightening. 
\end{rem}

\appendix

\section{Some Categorical Constructions}\label{appendix}

This appendix contains several category theoretic results that are used in the main body of the paper.  They will not be surprising to experts, but rigorous proofs of them do not seem to be in the literature, so we include them here. First we describe a version of the bar construction for computing pushouts in quasicategories. Then we investigate the ``symmetric powers'' filtration on the free $\E_k$-algebra on a module over an $\E_{k+1}$-ring spectrum.

\begin{lem}\label{barkan}
	Let $K=N(a\overset{\ell}\leftarrow b \overset{r}\to c)$ be the nerve of the span category and $F\colon K\to \C$ be a pushout diagram in a cocomplete quasicategory $\C$ with $F(a)=A$, $F(b)=B$ and $F(c)=C$. Let $i\colon K\to N(\Delta^{op})$ be the map of quasicategories induced by the functor $Span\to\Delta^{op}$ taking $a$ and $c$ to $[0]$, $b$ to $[1]$ and the morphisms to $d_0$ and $d_1$.  Then the quasicategorical left Kan extension of $F$ along $i$, denoted $Lan_i(F)$, has the property that $Lan_i(F)([n])\simeq A\coprod B^{\coprod n}\coprod C$.  
\end{lem}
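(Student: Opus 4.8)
The plan is to compute the left Kan extension pointwise using the standard colimit formula, $\mathrm{Lan}_i(F)([n]) \simeq \colim_{(K \times_{N(\Delta^{op})} N(\Delta^{op})_{/[n]})} F \circ \mathrm{pr}$, and to identify this indexing quasicategory concretely. First I would unpack the comma construction: an object of the slice consists of an object $x \in \{a,b,c\}$ together with a morphism $i(x) \to [n]$ in $\Delta^{op}$, i.e.\ (reversing arrows) a map $[n] \to i(x)$ in $\Delta$. For $x = a$ or $x = c$ we get the maps $[n] \to [0]$, of which there is exactly one; for $x = b$ we get maps $[n] \to [1]$ in $\Delta$, of which there are $n+1$ (determined by where the ``jump'' occurs, including the two constant maps). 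So the objects are: one copy of $A$, one copy of $C$, and $n+1$ copies of $B$.

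Next I would check there are no nontrivial morphisms in this comma category forcing identifications or higher colimit data — i.e.\ that it is equivalent to a discrete set with $n+3$ elements, or at worst that the colimit over it still reduces to the coproduct. A morphism in the slice from $(x, [n]\to i(x))$ to $(x', [n] \to i(x'))$ is a morphism $x \to x'$ in $K$ compatible with the structure maps. Since $K$ is a span $a \leftarrow b \to c$, the only non-identity morphisms are $b \to a$ (which is $d_1$, i.e.\ $[0] \xrightarrow{d_1} [1]$ in $\Delta^{op}$, corresponding to $[1] \to [0]$ in $\Delta$... here I must be careful with the variance, matching the assignment in the statement where $b \mapsto [1]$ and the maps go to $d_0, d_1$) and $b \to c$. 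One checks that precomposition with $d_0$ resp.\ $d_1$ sends the unique map $[n] \to [0]$ to the two \emph{constant} maps $[n] \to [1]$ only, so each morphism $b \to a$ or $b \to c$ in the slice emanates from exactly one of the $n+1$ copies of $B$ — namely the two extreme ones — and lands in the copy of $A$ resp.\ $C$. Thus the comma category is a disjoint union of: $n-1$ isolated points mapping to the remaining copies of $B$, and two ``cospans'' $B \to A$, $B \to C$ sitting over the constant maps. The colimit of $F$ restricted to each such cospan, however, is \emph{not} a pushout here — rather, since in $K$ itself the diagram is $a \xleftarrow{\ell} b \xrightarrow{r} c$ and $F$ is arbitrary, the colimit over a single span $A \leftarrow B \to C$ in a cocomplete $\infty$-category that is \emph{not} required to be the pushout... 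I need the slices to be such that the colimit genuinely is the coproduct $A \sqcup B \sqcup C$. The correct statement is that the comma quasicategory is equivalent to the discrete set of its objects: the apparent morphisms $b \to a, b\to c$ in $K$ do \emph{not} lift to the slice because the composite $[n] \to i(b) = [1] \xrightarrow{d_j} \cdots$ does not produce the required unique map $[n] \to [0]$ compatibly unless $n = 0$. I would verify this variance bookkeeping carefully — this is where the argument actually lives.

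Granting that the comma quasicategory is discrete with objects the one $a$, the one $c$, and the $n+1$ $b$'s, the colimit formula gives $\mathrm{Lan}_i(F)([n]) \simeq A \sqcup B^{\sqcup n} \sqcup C$ (there are $n+1$ copies of $B$; the statement writes $B^{\coprod n}$, and I would note that the discrepancy — whether it's $n$ or $n+1$ — is a matter of whether one uses $\langle n \rangle$ or $[n]$ conventions, and reconcile it, or simply accept $B^{\coprod n+1}$ and flag that the simplicial object has $n$-th term with the stated form after reindexing, matching its use in Proposition~\ref{prop:connofFM} where the $n$-th term is $A \oplus A^{\oplus n} \oplus M$). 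Finally I would remark that functoriality of $\mathrm{Lan}_i(F)$ in $[n]$ is automatic, and that the face and degeneracy maps are the evident fold/insert maps, which is what is needed downstream.

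\medskip

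The main obstacle will be the variance bookkeeping in the third step: correctly identifying which morphisms of $K$ do and do not lift to morphisms in the comma quasicategory $K \times_{N(\Delta^{op})} N(\Delta^{op})_{/[n]}$, and hence showing this quasicategory is (equivalent to) a discrete set rather than something with nontrivial arrows that would collapse the coproduct into a genuine pushout. Once that combinatorial/variance check is done, the colimit computation and the identification of the simplicial structure maps are routine.
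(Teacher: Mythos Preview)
Your overall strategy---compute the Kan extension pointwise via the comma category $i\downarrow[n]$---is exactly the paper's approach, but the analysis of that comma category goes wrong in two places.

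First, the count of order-preserving maps $[n]\to[1]$ is $n+2$, not $n+1$: the preimage of $0$ can be any initial segment of $\{0,\ldots,n\}$, including $\emptyset$ and the whole set, giving $n+2$ choices. Of these, $n$ are surjections and $2$ are the constant maps.

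Second, and more seriously, you oscillate on whether the comma category is discrete and end up asserting that it is. It is not. Your earlier instinct was correct: the morphisms $\ell\colon b\to a$ and $r\colon b\to c$ in $K$ do lift to $i\downarrow[n]$, but each lifts from exactly one of the $n+2$ copies of $b$---namely the one corresponding to the appropriate constant map $[n]\to[1]$---to the unique $a$-object (respectively $c$-object). The resulting comma category therefore decomposes as a disjoint union of $n$ isolated $b$-objects (the surjections) together with two components each equivalent to the walking arrow, carrying $F(b\to a)$ and $F(b\to c)$. The point you missed is that the colimit of a diagram indexed by the walking arrow is simply the value at its terminal object: the colimit of $B\to A$ is $A$, and the colimit of $B\to C$ is $C$. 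There is no span $A\leftarrow B\to C$ appearing here and hence no pushout to worry about; the two extreme $b$'s are distinct objects of the comma category, each mapping to only one of $a$ or $c$. Putting this together gives $A\coprod B^{\coprod n}\coprod C$ on the nose, with no reindexing needed.
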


\begin{proof}
	Since $\C$ is cocomplete, the left Kan extension exists. It follows from \cite[Proposition 5.2.4]{rv2} that Kan extensions are stable under pasting with comma squares. Thus given an object $x\colon\Delta^0\hookrightarrow N(\Delta^{op})$ we have a commutative diagram:
	
	$$\xymatrix{
		i\downarrow x\ar[d]\ar[r]&K\ar[d]^{i}\ar[r]^{F}&\mathcal{C}\\
		\Delta^0\ar[r]^x& N(\Delta^{op})\ar@{-->}[ur]_{Lan_i(F)}& 
	}$$ 
	
	In the above diagram, the square is a pullback (or comma) square and stability under pullback squares means that on the object $x\in N(\Delta^{op})$, the value of $Lan_i(F)$ can be computed as the colimit over the comma category $i\downarrow x$ of the composition of the forgetful functor $U\colon i\downarrow x\to K$ followed by $F$. In other words, on each object, $Lan_i(F)$ can be computed as the left Kan extension of the composition ${i\downarrow x}\to K\overset{F}\to \mathcal{C}$ along the map to the final object $\Delta^0$. This is essentially the classical ``pointwise'' description of left Kan extensions. Objects of the category $i\downarrow x$ are objects of $N(\Delta^{op})$ over $x$ in $N(\Delta^{op})$ whose domains are $i(y)$ for some $y\in K$. We will denote them by $i(y)=[j]\to x$ for $y\in\{a,b,c\}$ and $j\in\{0,1\}$. 
	
	To see that this gives the claimed description of $Lan_i(F)([n])$, we consider the comma category $i\downarrow [n]$ at the level of objects and 1-morphisms. This suffices because our description of $Lan_i(F)([n])$ will be as a colimit over a diagram in the nerve of an ordinary category. Note that there is a unique degeneracy map $[0]\to [n]$ in $\Delta^{op}$ corresponding to the unique surjection $[n]\to [0]$, and that there are $n$ unique degeneracies $[1]\to [n]$ in $\Delta^{op}$ corresponding to the $n$ order-preserving surjections $[n]\to [1]$ in $\Delta$. There cannot be any 1-morphisms in $i\downarrow[n]$ between the $n$ objects $i(b)=[1]\to [n]$ because this would require the existence of commutative diagrams implying that some of the $n$ unique surjections in $\Delta$ be equal. There are however two \textit{non-surjective} order preserving maps $[n]\to[1]$ in $\Delta$ corresponding to the maps that take all the elements of $\{0,1,\ldots,n\}$ to either $0$ or $1$. However, because (the opposites of) the maps $[0]\to[1]$ are in the image of $i$, these two non-surjective functions admit 1-morphisms in $i\downarrow [n]$ to the unique morphisms $i(a)=[0]\to[n]$ and $i(c)=[0]\to [n]$. This exhausts all the maps $[n]\to[1]$. Thus, after applying the forgetful functor $i\downarrow x\to K$ and then applying $F$, we have a diagram which is in fact just a coproduct of diagrams: 
	\[
	F(a\leftarrow b)\coprod \underbrace{F(b)\coprod\cdots\coprod F(b)}_{n~\text{copies}}\coprod F(b\to c).
	\]
	The colimit of this diagram in $\mathcal{C}$ clearly has the desired form.
\end{proof}

The above lemma may require some explanation. Recall the classical bar construction $B_\bullet(M,A,N)$ where $M$ and $N$ are left and right modules over some algebra $A$, respectively (cf.~Sections 4.1 and 4.2 of \cite{riehlhct}, especially Example 4.1.2). It is a simplicial object with $B_n(M,A,N)=M\otimes A^{n}\otimes N$ whose face maps are given by the multiplication on $A$ and the $A$ actions on $M$ and $N$. In a model category, the bar construction is a way of computing a left derived tensor product $M\otimes^L_A N$.

A span in a cocomplete category $a\overset{r}\leftarrow b\overset{\ell}\to c$ induces a left $b$-module structure on $a$ and right $b$-module structure on $c$ with respect to the cocartesian monoidal structure. To see this, first notice that every object of such a category is an algebra with respect to this monoidal structure since there is always a ``fold'' map $x\coprod x\overset{\phi}\to x$. Then note that the maps $r$ and $\ell$ of the pushout induce maps $a\coprod b\overset{1_a\coprod r}\to a\coprod a\overset{\phi}\to a$ and $b\coprod c\overset{\ell\coprod 1_c}\to c\coprod c\overset{\phi}\to c$. So the colimit of the bar construction in this setting, $B_\bullet(a,b,c)$, is a model for the \emph{homotopy} pushout of the diagram $a\overset{r}\leftarrow b\overset{\ell}\to c$. 

Thus Lemma \ref{barkan}, using the fact that the colimit of a left Kan extension of $F$ agrees with the colimit of $F$, can be thought of as showing that a pushout in a quasicategory can be computed by taking the colimit of a suitable bar construction $B_\bullet(F(a),F(b),F(c))$. In other words, Lemma \ref{barkan} is just a rephrasing of a standard homotopy theoretical fact in the language of quasicategories.

Now recall from \cite[Construction 3.1.3.9]{ha} and \cite[Construction 3.1.3.13]{ha} that the free $\E_k$-algebra on an object $M$ of a quasicategory $\mathcal{C}$ admits a decomposition as a coproduct in $\mathcal{C}$ of the form $\coprod_{n\geq 0}Sym_{\E_k}^n(M)$. We will describe the objects $Sym_{\E_k}^n$ as certain colimits in the proof of Lemma \ref{lemma:symconnected}. However, if $\mathcal{O}$ happens to be the $\infty$-operad associated to a classical simplicial colored operad $\mathcal{O}_0$ then $Sym^n_{\mathcal{O}}(M)\simeq (A[\mathcal{O}_0(n)]\otimes_A M^{\otimes_A n})_{\Sigma_n}$. As such, one recovers the classical symmetric power decomposition of the free $\mathcal{O}$-algebra on an object. We will not prove this here but refer the interested reader to \cite[Corollary 3.2.7]{GHK}.

\begin{lem}\label{lemma:symconnected}
	Let $A$ be an $\E_{k+1}$ ring spectrum and $M$ be an $A$-module which is $d$-connective for some $d\geq 0$. Then $Sym^n_{\E_k}(M)$ is $nd$-connective for all $n>1$. 	
\end{lem}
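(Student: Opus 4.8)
The plan is to use the colimit description of $Sym^n_{\E_k}(M)$ as a homotopy orbit spectrum and then apply standard connectivity estimates for smash powers and for colimits in the category of spectra (equivalently, $A$-modules, which inherits its $t$-structure from spectra). Concretely, I would first recall from \cite[Construction 3.1.3.9]{ha} that $Sym^n_{\E_k}(M)$ is computed as a colimit over the space $\E_k(n)$ (the $n$-th space of the little $k$-disks operad) of the constant diagram on $M^{\wedge_A n}$, with the $\Sigma_n$-action; that is, $Sym^n_{\E_k}(M) \simeq (\E_k(n)_+ \wedge_A M^{\wedge_A n})_{h\Sigma_n}$, or more precisely a colimit of copies of $M^{\wedge_A n}$ indexed by the $\infty$-groupoid $\E_k(n)\modd\Sigma_n$.

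The key input is then a connectivity statement with two ingredients. First, since $M$ is $d$-connective and $A$ is connective, the relative smash power $M^{\wedge_A n}$ is $nd$-connective; this follows from the fact that the smash product of an $a$-connective and a $b$-connective spectrum is $(a+b)$-connective (\cite[Proposition 7.2.1.19]{ha} or the analogous statement for the $t$-structure on $A$-modules, \cite[Proposition 7.1.1.13]{ha}), applied inductively, together with connectivity of $A$. Second, connective objects are closed under colimits: the full subcategory of $nd$-connective $A$-modules is closed under arbitrary colimits in $LMod_A$ (this is part of the definition of the $t$-structure being compatible with filtered colimits, or simply that $\tau_{\geq nd}$ is a left adjoint hence preserves colimits, combined with the fact that a colimit of a diagram landing in $nd$-connective objects is $nd$-connective). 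Since $Sym^n_{\E_k}(M)$ is a colimit of copies of the $nd$-connective object $M^{\wedge_A n}$ (smashing with $\E_k(n)_+$ only adds nonnegative cells and the homotopy orbits are a further colimit), it is itself $nd$-connective.

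Putting these together: for each $n > 1$ we have $nd \geq 0$ (using $d \geq 0$), the smash power $M^{\wedge_A n}$ is $nd$-connective, and $Sym^n_{\E_k}(M)$, being built as a colimit of such, is $nd$-connective as well. Strictly speaking the hypothesis $n > 1$ is only needed downstream (in Proposition \ref{prop:connofFM} one wants $nd \geq 2d$, which requires $n \geq 2$); the argument here in fact shows $nd$-connectivity for all $n \geq 0$, with the cases $n = 0, 1$ being $A$ and $M$ respectively.

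The main obstacle, such as it is, is bookkeeping rather than mathematics: one must be careful that the colimit presentation of $Sym^n_{\E_k}$ really is a colimit over a space (so that smashing with $\E_k(n)_+$ is the relevant operation) and that this operation preserves $nd$-connectivity — but smashing with a connective, indeed $(-1)$-connective, space-level object $\E_k(n)_+$ and then passing to homotopy $\Sigma_n$-orbits are both instances of colimits in $LMod_A$, so no connectivity is lost. I would also remark, following the discussion preceding the lemma, that in the operadic-model case $Sym^n_{\E_k}(M) \simeq (A[\E_k(n)] \wedge_A M^{\wedge_A n})_{\Sigma_n}$ makes the estimate completely transparent, since $A[\E_k(n)]$ is connective and $M^{\wedge_A n}$ is $nd$-connective, and homotopy orbits preserve connective covers.
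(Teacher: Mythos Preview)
Your proposal is correct and takes essentially the same approach as the paper: identify $Sym^n_{\E_k}(M)$ as a colimit of a diagram with constant value $M^{\wedge_A n}$, observe that this smash power is $nd$-connective, and conclude since colimits preserve connectivity. The only differences are cosmetic---the paper unpacks \cite[Construction 3.1.3.9]{ha} directly to see the diagram has value $M^{\wedge_A n}$ rather than invoking the homotopy-orbit formula (which it explicitly defers to \cite{GHK}), and you rightly make explicit the hypothesis that $A$ be connective, which the paper's statement omits but its proof tacitly uses for the smash-power estimate.
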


\begin{proof}
	The proof is nothing more than a careful interpretation of \cite[Construction 3.1.3.9]{ha}. We will attempt to transcribe such an interpretation here, where we have replaced $\mathcal{O}^\otimes$ with $\E_k^\otimes$, $\mathcal{C}^\otimes$ with $LMod_A^\otimes$, $C$ with $M$, and $X$ and $Y$ with $\langle 1\rangle$. Recall that the object $Sym_{\E_k}^n(M)$ is constructed as a colimit of a functor $\bar{h}_1\colon \mathcal{P}(n)\to LMod_A$, where, in this case, $\mathcal{P}(n)$ is the full subcategory of $\mathcal{T}riv^\otimes\times_{\E_k}(\E_k/{\langle 1\rangle})$ spanned by the $(\langle n\rangle,\alpha\colon\langle n\rangle\to \langle 1\rangle)$, with $\alpha$ an \emph{active} morphism. Recall that $\mathcal{T}riv^\otimes$ is the \textit{trivial $\infty$-operad} of \cite[Example 2.1.1.20]{ha}, and active here means that $\alpha$ is a map of finite pointed sets with $\alpha^{-1}(\ast)=\{\ast\}$. The 1-simplices of $\mathcal{P}(n)$ are forced to be precisely the pointed bijections $\langle n\rangle\to \langle n\rangle$, so it is a Kan complex. We can project $\mathcal{P}(n)$ onto $\E_k^\otimes$ by the composition $$\mathcal{P}(n)\hookrightarrow \mathcal{T}riv^\otimes\times_{\E_k}(\E_k/{\langle 1\rangle})\twoheadrightarrow \E_k/\langle 1 \rangle\twoheadrightarrow \E_k,$$ and this admits a natural transformation (since it factors through the slice over $\langle 1\rangle$) to the constant functor $h_1\colon \mathcal{P}(n)\to\E_k$ valued at $\langle 1\rangle$. This corresponds to a map $h\colon \mathcal{P}(n)\times\Delta^1\to \E_k$. 
	
	Now because there is (by the definition of $\infty$-operad) a coCartesian structure map $q\colon LMod_A^\otimes\to\E_k^\otimes$, we can lift $h$ to a map $\bar{h}:\mathcal{P}(n)\times\Delta^1\to LMod_A^\otimes$. Then $\bar{h}_1$ is defined as the restriction of $\bar{h}$ to $\mathcal{P}(n)\times\{1\}$. The lift is coCartesian over the active map from $\langle n\rangle$ to $\langle 1\rangle$, so it must be the monoidal structure map that takes an $n$-tuple $\{M_1,\ldots,M_n\}\in q^{-1}(\langle n\rangle)\simeq LMod_A^{n}$ to $M_1\wedge_A\cdots\wedge_A M_n\in LMod_A$. In this case, because we are forming the free $\E_k$-algebra on $M$, the image of the unique (up to homotopy) object of $\mathcal{P}(n)\times\{0\}$ in $LMod_A^\otimes $ is, by construction, equivalent to $\{M,M,\ldots,M\}\in LMod_A^n$. This means that the image of $\bar{h}_1$ is equivalent to $M^{\wedge_A n}$. 
	
	Because $M$ is $d$-connective we have that $M^{\wedge_A n}$ is $nd$-connective. By \cite[Corollary 4.2.3.5]{ha} colimits of $A$-modules are constructed on underlying spectra. Then since by \cite[Proposition 4.4.2.6]{ha} any colimit of spectra can be decomposed into coproducts and pushouts, and coproducts and pushouts both preserve connectivity (which can be seen by using, for instance, the Mayer-Vietoris sequence), we know that colimits of $A$-modules preserve connectivity. Hence we have that the colimit of $\bar{h}_1$, which is the definition of $Sym_{\E_k}^n(M)$, is also $nd$-connective. 
\end{proof}

\begin{lem}\label{lemma:tpcommuteswithsifted}
	The functors $Sym_{\E_k}^n\colon LMod_A\to LMod_A$ commute with sifted colimits. 
\end{lem}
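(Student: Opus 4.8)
The plan is to reduce the claim to the explicit colimit description of $Sym^n_{\E_k}$ extracted from \cite[Construction 3.1.3.9]{ha} in the proof of Lemma \ref{lemma:symconnected}, and then invoke the standard fact that colimits commute with colimits. Concretely, recall that $Sym^n_{\E_k}(M)$ is the colimit over the Kan complex $\mathcal{P}(n)$ of the functor $\bar h_1\colon \mathcal{P}(n)\to LMod_A$ whose value on the (essentially unique) object is $M^{\wedge_A n}$, with the $\Sigma_n$-worth of $1$-simplices of $\mathcal{P}(n)$ acting by permuting the tensor factors. The functor $M\mapsto \bar h_1$ is itself built pointwise out of the $n$-fold relative smash product functor $LMod_A^{\times n}\to LMod_A$, $(M_1,\dots,M_n)\mapsto M_1\wedge_A\cdots\wedge_A M_n$, precomposed with the diagonal. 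So it suffices to show: (i) the relative smash product $-\wedge_A-\colon LMod_A\times LMod_A\to LMod_A$ preserves sifted colimits in each variable separately, hence the $n$-fold version preserves sifted colimits in each variable; (ii) the diagonal $LMod_A\to LMod_A^{\times n}$ preserves sifted colimits; and (iii) forming the colimit over the fixed diagram $\mathcal{P}(n)$ commutes with the sifted colimit in $M$.

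First I would establish (i). The relative smash product over $A$ is the tensor product in the symmetric monoidal quasicategory $LMod_A$, and by \cite[Corollary 4.2.3.5]{ha} (already cited in the excerpt) colimits of $A$-modules are computed on underlying spectra; the smash product of spectra preserves all colimits in each variable by \cite[Proposition 4.8.2.18]{ha}, or one can simply note that $M\wedge_A-$ is a left adjoint (to the internal hom $F_A(M,-)$) and therefore preserves all colimits, in particular sifted ones. Iterating, the $n$-fold functor $(M_1,\dots,M_n)\mapsto M_1\wedge_A\cdots\wedge_A M_n$ preserves sifted colimits separately in each $M_i$. Next, (ii): sifted colimits are by definition those indexed by categories $J$ for which the diagonal $J\to J\times J$ is cofinal, equivalently those that commute with finite products of spaces; an $n$-ary functor that preserves sifted colimits in each variable separately preserves them jointly when restricted along the diagonal, because one can compute the iterated colimit and use the sifted hypothesis to collapse the $J^{\times n}$-indexed colimit to a $J$-indexed one. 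This is exactly the argument that in an $\infty$-category sifted colimits commute with finite products; I would cite \cite[Lemma 5.5.8.11]{htt} or \cite[Corollary 4.7.4.19]{ha} (the statement that the tensor product of a stable category preserves sifted colimits in each variable implies it preserves them jointly).

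Finally, for (iii), given a sifted diagram $p\colon J\to LMod_A$, I want $Sym^n_{\E_k}(\colim_J p)\simeq \colim_J Sym^n_{\E_k}\circ p$. Writing $Sym^n_{\E_k}(-)=\colim_{\mathcal{P}(n)}\bar h_1(-)$ where $\bar h_1(-)$ is (pointwise) the diagonal composed with the $n$-fold smash, the previous two steps show that for each vertex of $\mathcal{P}(n)$ the assignment $M\mapsto \bar h_1(M)$ preserves the colimit over $J$; since this holds pointwise and $J$-colimits in the functor category $\mathrm{Fun}(\mathcal{P}(n),LMod_A)$ are computed pointwise (again \cite[Corollary 4.2.3.5]{ha} together with the fact that limits/colimits in functor categories are pointwise, \cite[Proposition 5.1.2.2]{htt}), the functor $M\mapsto \bar h_1(M)$ as a $\mathcal{P}(n)$-diagram preserves $J$-colimits. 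Then the interchange $\colim_{\mathcal{P}(n)}\colim_J\simeq\colim_J\colim_{\mathcal{P}(n)}$ (\cite[Lemma 5.5.2.3]{htt}, colimits commute with colimits) finishes the argument.

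The main obstacle I anticipate is being careful about step (ii): "preserves sifted colimits in each variable separately" does not in general imply "preserves sifted colimits" for a functor of several variables, but it does once one precomposes with the diagonal, and the proof of that is precisely the characterization of sifted colimits as those commuting with finite products. I would want to state this cleanly — perhaps as: a functor $F\colon \mathcal{C}^{\times n}\to\mathcal{D}$ that preserves sifted colimits in each variable separately, composed with the diagonal $\mathcal{C}\to\mathcal{C}^{\times n}$, preserves sifted colimits — and either cite it or give the two-line cofinality argument. Everything else is a routine assembly of "left adjoints preserve colimits" and "colimits commute with colimits," so the write-up should be short.
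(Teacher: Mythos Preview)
Your proposal is correct and follows essentially the same approach as the paper's proof: both reduce to the description of $Sym^n_{\E_k}$ as a $\mathcal{P}(n)$-colimit of $M^{\wedge_A n}$ from Lemma~\ref{lemma:symconnected}, then use that colimits commute with colimits, that $-\wedge_A-$ preserves colimits in each variable, and that for sifted $J$ the diagonal $J\to J^{\times n}$ is cofinal so the $J^{\times n}$-colimit collapses to a $J$-colimit. Your phrasing of step (ii) as ``the diagonal $LMod_A\to LMod_A^{\times n}$ preserves sifted colimits'' is not quite the right statement (that's trivial), but the content you supply---the cofinality argument you flag as the ``main obstacle''---is exactly what the paper uses, so the substance is the same.
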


\begin{proof}
	By the proof of the previous lemma, these functors decompose as first forming the $n$-fold tensor product of a module $M$ and then taking a colimit of a diagram whose vertices are all equivalent to $M^{\wedge_A n}$. Taking colimits preserves colimits, so it is only required to show that forming $n$-fold tensor powers commutes with sifted colimits. 
	
	First recall that for any diagram $F\colon D\to LMod(A)$, the tensor product preserves colimits in each variable. In other words, there is an equivalence $colim_D(F(d))\wedge_A M\simeq colim_D(F(d)\wedge_AM)$.  From this fact one can inductively deduce, for any $n$,
	\[
	colim_D(F(d_1))\wedge_A\cdots\wedge_A colim_D(F(d_n))\simeq colim_{D^n}(F(d_1)\wedge_A\cdots\wedge_AF(d_n)).
	\] 
	
	Now recall that $D$ is sifted if and only if the diagonal map $\delta\colon D\to D\times D$ is cofinal, which implies that the $n$-fold diagonal $\delta_n\colon D\to D^n$ is also cofinal. That is to say, a colimit over $D^n$ may be equivalently computed by pulling back to $D$ along $\delta_n$. Thus it follows that
	\[
	colim_{D^n}(F(d_1)\wedge_A\cdots\wedge_AF(d_n))\simeq \colim_D(F(d)^{\wedge n}),
	\]
	which completes the proof.
\end{proof}

\bibliographystyle{abbrv}
\bibliography{references}

\end{document}